\documentclass[11pt]{article}
\usepackage[T1]{fontenc}
\usepackage{lmodern}
\usepackage{amsmath,amssymb,amsthm,mathtools,mathrsfs}
\usepackage[a4paper,margin=28mm]{geometry}
\usepackage{microtype}
\usepackage[hidelinks]{hyperref}
\usepackage{tikz}

\newtheorem{theorem}{Theorem}
\newtheorem{lemma}[theorem]{Lemma}
\newtheorem{conjecture}[theorem]{Conjecture}
\newtheorem{proposition}[theorem]{Proposition}
\theoremstyle{definition}
\newtheorem{remark}[theorem]{Remark}
\newtheorem{definition}[theorem]{Definition}

\newcommand{\F}{\mathcal F}
\newcommand{\G}{\mathcal G}
\newcommand{\A}{\mathcal A}
\newcommand{\B}{\mathcal B}
\newcommand{\V}{\mathcal V}
\newcommand{\eps}{\varepsilon}
\newcommand{\Pp}{\mathcal P}

\newcommand{\ar}{\mathop{}\!\mathrm{ar_m}}
\newcommand{\dist}{\operatorname{dist}}
\newcommand{\HH}{\operatorname{H}}
\newcommand{\Hh}{\mathcal H}
\newcommand{\Forb}{\operatorname{Forb}}
\newcommand{\LAw}{\mathop{}\!\mathrm{La}}
\newcommand{\La}{\mathop{}\!\mathrm{La}}

\title{Embedding rooted blow-ups of tree posets}
\author{Bal\'azs Patk\'os\thanks{HUN-REN Alfr\'ed R\'enyi Institute of Mathematics and Department of Computer Science and Information Theory, Budapest University of Technology and Economics, email: patkos@renyi.hu}}
\date{}

\begin{document}
\maketitle

\begin{abstract}
    A tree poset $T$ is a poset whose Hasse diagram is a tree. Bukh proved that if a family $\F\subseteq 2^{[n]}$ contains $(h(T)-1+\varepsilon)\binom{n}{\lfloor \frac{n}{2}\rfloor}$ sets, then $\F$ contains a weak copy of $T$, where $h(T)$ is the height of $T$, the number of elements in a longest chain of $T$. Several strengthenings and generalizations of this result have been obtained. We prove the following robust variant. For a tree poset $T$ and $x\in T$, the $b$-blow-up $T(x,b)$ rooted at $x$ is the tree poset that we obtain from $T$ by replacing every element $u$ by $b^d$ new elements, where $d$ is the distance $d(x,u)$ in the Hasse diagram of $T$ and an edge $uv$ with $v$ being closer to $x$ is replaced by edges such that every new copy of $v$ is joined to $b$ new copies of $u$ such that these new copies form pairwise disjoint sets for the copies of $v$. We prove that for any tree poset $T$, $x\in T$, and $\varepsilon>0$ there exists $\delta$ such that if $\F\subseteq 2^{[n]}$ contains $(h(T)-1+\varepsilon)\binom{n}{\lfloor \frac{n}{2}\rfloor}$ sets, then $\F$ contains a weak copy of $T(x,\lfloor \delta n\rfloor)$. This settles a conjecture of Treglown and the author. As applications, we derive the known asymptotic counting and random versions of Bukh's theorem from this stronger embedding result, and obtain new maximal anti-Ramsey results for tree posets.
\end{abstract}

\section{Introduction}

We use standard notation: $[n]$ denotes the set of the first $n$ positive integers, and for any set $S$, we write $2^{S}$ and $\binom{S}{k}$ to denote its power set and the family of its $k$-element subsets, respectively.

\medskip

One of the starting points of extremal finite set theory is the theorem of Sperner stating that if $\F\subseteq 2^{[n]}$ does not contain $F,F'$ with $F\subseteq F'$, then $|\F|\le \binom{n}{\lfloor \frac{n}{2}\rfloor}$ and equality holds if and only if $\F=\binom{[n]}{\lfloor \frac{n}{2}\rfloor}$ or $\F=\binom{[n]}{\lceil \frac{n}{2}\rceil}$. To generalize
 forbidden containment patterns, Katona and Tarj\'an \cite{KatonaTarjan} introduced the following definition: a family $\G$ of sets is a \textit{weak copy} of the poset $(P,\preceq)$, if there exists a bijection $\iota:P\rightarrow \G$ such that $p\preceq q$ implies $\iota(p) \subseteq \iota(q)$. We say that $\F$ is $P$-free if it does not contain any weak copies of $P$. Determining $\LAw(n,P)=\max \{|\F|: \F\subseteq 2^{[n]}, ~\F ~\text{is $P$-free}\}$ is an extensively studied extremal problem, see \cite{AMP,GLsurv} and Chapter 7 of \cite{GP} for surveys.

 A natural way to construct a large $P$-free family is to consider as many middle layers of $2^{[n]}$ as possible. Formally, let $e(P)$ denote the maximum integer $k$ such that $\cup_{i=1}^k\binom{[n]}{\lfloor \frac{n-k}{2}\rfloor +i}$ is $P$-free for all $n$. Then by definition, we have $\LAw(n,P)\ge (e(P)+o(1))\binom{n}{\lfloor \frac{n}{2}\rfloor}$. It was conjectured that for posets $P$, it holds that $e(P)=\lim_{n\rightarrow \infty}\frac{\LAw(n,P)}{\binom{n}{\lfloor \frac{n}{2}\rfloor}}=:\pi(P)$. (It is not known whether the limit $\pi(P)$ exists for all posets, so $\pi^-(P)$ and $\pi^+(P)$ are used for liminf and limsup, respectively.)

 This conjecture was refuted by Ellis, Ivan, and Leader \cite{EIL}, so it is interesting to see how far one can push the extremal result for posets with the property $e(P)=\pi(P)$. Supersaturation, counting results and probabilistic versions of the extremal problem were obtained in \cite{BGW,GNPV,Jetal,PT}. The most important class of posets for which $e(P)=\pi(P)$ was proved \cite{Bukh} is that of tree posets.

The \textit{directed Hasse diagram} $\overrightarrow{H}(P)$ of $(P,\preceq)$ is a directed graph with vertex set $P$ and $\overrightarrow{pq}$ is an arc if and only if $p\preceq q$ and there does not exist $z\neq p,q$ with $p\preceq z\preceq q$. The \textit{undirected Hasse diagram} $H(P)$ is
obtained from $\overrightarrow{H}(P)$ by dropping the orientation of arcs.
A \emph{tree poset} is a finite nonempty poset whose undirected Hasse
diagram is a tree. Its height $h(T)$ is the maximum number of elements
in a chain. Note that for any tree poset $T$, $e(T)=h(T)-1$ holds.

\begin{theorem}[Bukh, \cite{Bukh}]\label{tree}
    For any tree poset $T$, we have $\pi(T)=h(T)-1$.
\end{theorem}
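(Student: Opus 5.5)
The lower bound $\pi^-(T)\ge h(T)-1$ is immediate. The $h(T)-1$ middle layers contain no chain of length $h(T)$, so they contain no weak copy of $T$, and their total size is $(h(T)-1+o(1))\binom{n}{\lfloor n/2\rfloor}$. The substance is the upper bound. I need to show that every $\F\subseteq 2^{[n]}$ with $|\F|\ge (h(T)-1+\eps)\binom{n}{\lfloor n/2\rfloor}$ contains a weak copy of $T$ once $n$ is large. I would argue with the chain (Lubell) method. Choose a uniformly random maximal chain $\mathcal C$ of $2^{[n]}$. Then
\[
\mathbb E|\F\cap\mathcal C|=\sum_{F\in\F}\binom{n}{|F|}^{-1}\ge |\F|\binom{n}{\lfloor n/2\rfloor}^{-1}\ge h(T)-1+\eps .
\]
We may first discard all sets of size outside $[n/2-C\sqrt n,\,n/2+C\sqrt n]$, which costs only $o(\binom{n}{\lfloor n/2\rfloor})$ sets and keeps the density above $h(T)-1+\eps/2$. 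Call a maximal chain \emph{rich} if it meets $\F$ in at least $h:=h(T)$ sets. By the averaging bound, rich chains have positive probability, and in fact the expected excess $\mathbb E\bigl[(|\F\cap\mathcal C|-(h-1))^+\bigr]$ is at least $\eps/2$.

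To build $T$ I would induct on $|T|$. Remove a leaf $\ell$ of the Hasse tree, attached to $v$, and say without loss of generality that $\ell\succ v$. I would prove a stronger, "robust" statement instead of a bare existence statement. The statement: for a suitable large subfamily $\F'\subseteq\F$, every set in $\F'$ can serve as the image of any prescribed element, and each element of $T$ can be embedded so as to avoid any bounded set of already-used sets. Concretely, I would follow Bukh's approach. Define a set $F\in\F$ to be \emph{$k$-good upward} if a random maximal chain through $F$ meets $\F$ above $F$ in at least $k$ sets with probability bounded away from zero. Define \emph{downward good} analogously. The density hypothesis then implies that many sets are simultaneously $(a)$-good downward and $(h-1-a)$-good upward, for the appropriate splits $a$.

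The embedding then proceeds greedily along a breadth-first order of the tree. Suppose $v$ is already embedded as $F_v$ with the needed goodness. A positive fraction of the chains through $F_v$ provide a chain of $\F$-sets above $F_v$ long enough to host the path of $T$ going up from $v$. Because there are exponentially many chains, and each already-used set blocks only an $O(1/\sqrt n)$ fraction of them (sets in the middle band), we can always pick a fresh set. This supplies the injectivity required by a weak copy.

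The main obstacle is keeping goodness hereditary. When we step from $v$ to a neighbour $u$, the new set $F_u$ must itself be good in the direction needed for $u$'s remaining subtree, and it must also stay in the correct height budget: if $u$ sits at level $i$ of a longest chain of $T$, then $F_u$ needs roughly $i-1$ sets of $\F$ below it and $h-i$ above it. I would handle this with a cleaning step. Iteratively delete from $\F$ every set $F$ for which the conditional expectation, over chains through $F$, of $|\F\cap\mathcal C|-(h-1)$ falls below $\eps/4$. The total loss in the expected excess is at most $\eps/4$, so the cleaned family remains nonempty. In the cleaned family, every set lies on many chains carrying at least $h$ members. From that point the greedy tree embedding goes through by choosing each new set on a rich chain through its parent's image, at the right position along that chain.

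The delicate quantitative point is that deleting sets changes the chain counts of other sets. I would control this by running the deletion as an iterated process and bounding the total deleted weight through the Lubell sum.
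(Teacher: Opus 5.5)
Your lower bound and the restriction to a middle band are fine. The upper bound, however, has a genuine gap exactly at the step you call the main obstacle.

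\textbf{The cleaning invariant does not make goodness hereditary.} Your test is a single number per set: the conditional expected \emph{total} number of members on a random maximal chain through $F$. It carries no information about how that mass splits between members below and above $F$. So it neither assigns sets to levels nor makes goodness hereditary.

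For example, take the full layers $\binom{[n]}{m+1},\dots,\binom{[n]}{m+h}$ together with one extra set $F_v$ of size $m$. Every member has conditional count at least $h$, so your cleaning leaves this family untouched. Let $v$ be a level-$0$ element whose upper neighbour $u$ has a second lower neighbour $w$, and prescribe $v\mapsto F_v$, as your robust statement allows. Placing $F_u$ at the next member of a rich chain through $F_v$ means $|F_u|=m+1$. Then $F_v$ is the only member below $F_u$, and the greedy step for $w$ is stuck. The safe choice $|F_u|=m+2$ cannot be distinguished from the bad one by your invariant.

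More generally, one rich chain through the parent's image hosts one path. A non-monotone tree needs branchings in alternating directions, and a single chain cannot supply them.

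\textbf{What is missing.} You need a level structure with two-sided, hereditary richness: pairwise disjoint $\V_0,\dots,\V_{h-1}\subseteq\F$ such that every bipartite containment graph $B(\V_{i-1},\V_i)$ has large minimum degree on \emph{both} sides. Once you have this, the saturated extension $S\supseteq T$ of Lemma~\ref{satu} embeds greedily with level $i$ placed in $\V_i$. For Bukh's theorem alone, a two-sided minimum degree exceeding $|S|$ suffices.

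The paper does not reprove Bukh's theorem separately, but Proposition~\ref{prop:extraction}, applied with $h-1$ in the role of its $h$, supplies exactly this structure on the way to Theorem~\ref{thm:main}. Its ingredients are:
\begin{itemize}
\item a random coloring with $h$ colors;
\item ``good blocks'' of consecutive members along maximal chains whose colors read $0,1,\dots,h-1$;
\item pruning of sets with too few neighbours on either required side, with the recorded deficient edges charged against Lubell weight so that some system survives.
\end{itemize}
Earlier graph-theoretic proofs obtained such degree conditions on only one side, and therefore covered only monotone trees.

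\textbf{The nonemptiness accounting is also unjustified.} Write $X=|\F\cap\mathcal C|$. Deleting $F$ lowers $\mathbb E\bigl[(X-(h-1))^+\bigr]$ by $\Pr(F\in\mathcal C,\ X\ge h)$. A bound on the signed conditional excess $\mathbb E[X-(h-1)\mid F\in\mathcal C]$ does not control this quantity, because chains through $F$ with $X=1$ can offset those with $X\ge h$. Even with a positive-part criterion, the total loss is bounded only by $\frac{\eps}{4}$ times the Lubell mass of all deleted sets, not by $\eps/4$.

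Charging in deletion order instead shows that if everything were deleted, then $\mathbb E\bigl[X(X+1)/2\bigr]<(h-1+\eps/4)\,\mathbb E X$. Combined with $\mathbb E[X^2]\ge(\mathbb E X)^2$, this contradicts $\mathbb E X\ge h-1+\eps/2$ only when $h\le 2$. So even the nonemptiness of your cleaned family is unproved for $h\ge3$.
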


As mentioned earlier, Theorem \ref{tree} was strengthened in several directions. One important tool in some of the proofs of these results was to show that a family of size $(h(T)-1+\varepsilon)\binom{n}{\lfloor \frac{n}{2}\rfloor}$ contains not only a copy of $T$, but a robust version of $T$. Here is the formal definition introduced in \cite{PT}.

\begin{definition}
    Let $T$ be a tree poset and $x\in T$, and let $d\ge 2$ be a positive integer. The \textit{$d$-blow-up $T(x,d)$ rooted at $x$} is the tree poset whose Hasse diagram is defined as follows: for each $u \in T$, if $u$
    is at distance $\rho$ from $x$ in  the undirected Hasse diagram of $T$, then $u$ is replaced with $d^{\rho}$ elements $u^1,u^2,\dots, u^{d^{\rho}}$; furthermore, if $uv$ is an edge of the Hasse diagram of $T$ with $v$ being at distance $\rho-1$ from $x$ in $T$, then the $u^i$s are partitioned into $d^{\rho-1}$ pairwise disjoint sets $U^1,U^2,\dots, U^{d^{\rho-1}}$ each of size $d$, and for every $j \in [d^{\rho-1}]$,
    $v^j$ is joined to all members of $U^j$. The orientation of all such edges is the same as that of $uv$.
\end{definition}

The authors of \cite{PT} formulated a general conjecture that they proved for a special class of tree posets. Their results were extended to a different subclass of tree posets in \cite{LPW}.

\begin{conjecture}[Conjecture ~1.3 in \cite{PT}]\label{conj}
For every tree poset $T$ and every $\eps>0$, there exist $\delta>0$ and
$n_0$, depending only on $T$ and $\eps$, such that the following holds for
every $n\ge n_0$ and every $x\in T$. If
$\F\subseteq 2^{[n]}$ with $|\F|\ge\bigl(h(T)-1+\eps\bigr)\binom n{\lfloor n/2\rfloor}$,
then $\F$ contains a copy of $T(x,\lfloor\delta n\rfloor)$.
\end{conjecture}

Our main result confirms Conjecture \ref{conj}.

\begin{theorem}\label{thm:main}
    Conjecture ~\ref{conj} is true.
\end{theorem}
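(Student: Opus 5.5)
We will prove Theorem~\ref{thm:main} by following Bukh's chain-counting strategy, strengthened so that each embedding step has many disjoint choices instead of one. Write $k=h(T)$. The starting point is the Lubell-type averaging behind Theorem~\ref{tree}: if $|\F|\ge (k-1+\eps)\binom{n}{\lfloor n/2\rfloor}$, then a uniformly random maximal chain $\mathcal C$ of $2^{[n]}$ satisfies $\mathbb E|\mathcal C\cap \F|\ge k-1+\eps$, after first discarding sets of size outside $[n/2-C\sqrt{n\log n},\,n/2+C\sqrt{n\log n}]$ at a cost of $o(\binom{n}{\lfloor n/2\rfloor})$. Hence a positive proportion (depending on $\eps,k$) of maximal chains meet $\F$ in at least $k$ sets. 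Following Bukh, we then pass to a ``well-behaved'' subfamily: iteratively delete from $\F$ those sets $F$ for which the proportion of chains through $F$ that contain at least $k$ members of the current family is less than some small $\gamma$. The total deletion is bounded by $O(\gamma)\binom{n}{\lfloor n/2\rfloor}$, so the remaining family $\F'$ still has size above $(k-1+\eps/2)\binom{n}{\lfloor n/2\rfloor}$. Moreover every $F\in\F'$ lies on a $\gamma$-fraction of maximal chains carrying at least $k$ members of $\F'$, and such a chain puts $F$ at each possible ``position'' $1,\dots,k$ with controlled frequency.

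The embedding goes by induction over the tree, exploring $T$ from the root $x$ in breadth-first order, in the style of the embedding of trees into graphs of large minimum degree. We first choose the image of $x$ as any $F\in\F'$ that is ``rich'' in the appropriate sense; Bukh's proof gives a labelling of $T$ by chain positions $\ell:T\to[k]$ that is compatible with the order. The central lemma has to be stated so that the induction can continue. It reads as follows: if $F\in\F'$ is rich for label $j$, and $u$ is a neighbour in $T$ with label $j'>j$ (the case $j'<j$ is symmetric, passing to complements), then there are at least $\delta n$ sets $G_1,\dots,G_{\delta n}\in\F'$ with $G_i\supsetneq F$, each rich for label $j'$. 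Multiply this by the branching at each vertex and the claim follows. We also have to make sure the chosen sets are distinct from everything already embedded; since only $(\delta n)^{|T|}$ sets are used in total, it suffices to find poly$(n)$ many candidates at each step.

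The main obstacle is proving that a rich set has \emph{linearly many} rich supersets, not just one. To get this we would count as follows. Chains through $F$ that contain at least $k$ members of $\F'$ pass through $F\cup\{a\}$ for some $a\notin F$, and there are about $n/2$ choices of $a$. Group these chains by the first element above $F$ that they hit in $\F'$. Suppose all such first hits $G$ were confined to a set $S$ of fewer than $\delta n$ elements $a$, meaning the chain's next element is $F\cup\{a\}$ with $a\in S$. Then only a $2\delta$-fraction of chains through $F$ could be good, contradicting richness once $\delta\ll\gamma$. Making this precise needs more than a single element: for a good chain through $F$, the set $G\setminus F$ can be large, so we will use the fact that the chain's next step is uniform among $n-|F|$ elements. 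We then pigeonhole on $G\setminus F$, forcing many distinct $G$'s, and require pairwise distinct $G$'s rather than disjointness, since weak copies only need a bijection. The same argument has to preserve richness for the next label, so it is done with a two-level density threshold: $\gamma$ for label $j$ and $\gamma'\ll\gamma$ for label $j'$, with a hierarchy of constants of depth $|T|$. Choosing $\delta$ at the bottom of this hierarchy then finishes the proof.
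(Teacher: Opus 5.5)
Your plan has two genuine gaps, and they are exactly where the paper needs its new machinery.

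\textbf{First gap: richness is not inherited.} Your first-hit argument does show that a set $F$ which is the $j$-th member of many good chains has many distinct next members $G\supsetneq F$, since one $G$ absorbs at most a $\binom{n-|F|}{|G\setminus F|}^{-1}$ share of the chains through $F$. But ``$G$ is rich for label $j'$'' is a statement about all chains through $G$. Those passing through $F$ form only a $\binom{|G|}{|F|}^{-1}$ fraction of them, so nothing transfers richness to the sets you find. For a non-monotone tree you also need $G$ to have many rich neighbours \emph{below} it at the next step. A hierarchy of thresholds $\gamma'\ll\gamma$ does not help, because for an individual $F$ every next member may be non-rich. What is needed is one pruning that imposes requirements on both sides simultaneously, and whose deletions do not shift positions on chains. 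This is precisely the obstacle that confined earlier proofs to monotone trees, and Proposition~\ref{prop:extraction} resolves it. It freezes positions by colouring $\A$ randomly with colours $0,\dots,h$. For each type sequence it prunes sets with too few neighbours of the prescribed size difference on either adjacent side. It then rules out total deletion via $g_\chi(\mathscr C)\le N_E(\mathscr C)$.

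Your preliminary claim that pruning costs only $O(\gamma)\binom{n}{\lfloor n/2\rfloor}$ is also false. Take $k-1$ full middle layers, plus the sets of one layer above them that are disjoint from $[r]$, with $2^{-r}\approx\eps$. Every set meeting $[r]$ lies on no chain with $k$ members and is deleted, so only about $k\eps\binom{n}{\lfloor n/2\rfloor}$ sets survive.

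\textbf{Second gap: injectivity is not addressed.} The $b^{d+1}$ copies of a depth-$(d+1)$ vertex must be distinct from each other and from all earlier images. Your lemma supplies only $\delta n$ candidates per parent, and for size difference $1$ a parent has at most $n$ candidates at all. So your condition that poly$(n)$ candidates suffice is never met, and candidates may be shared by many parents. Concretely, the information your induction carries (every parent has $\delta n$ admissible neighbours in the right direction) holds for $\V_0=\{A\}$, $\V_1=\{A\cup\{a\}:a\in S\}$, $\V_2=\{A\cup S\cup\{c\}:c\in R\}$ with $|S|,|R|$ linear. Yet there the $b^2$ top elements of a blown-up three-element chain cannot be placed injectively.

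The paper overcomes this with two devices. Both use the finer output of Proposition~\ref{prop:extraction}: exact size difference $j$ with minimum degree $cn^{j}$, or $cn^{Z-1}$ for type $+$.
\begin{enumerate}
\item For bounded differences, Lemma~\ref{lem:bounded} colours a Hamming ball by the original vertices. It imposes $(H\triangle P)\cap(P\triangle A)=\varnothing$, so each candidate has at most $2^R$ eligible parents, and then allocates candidates to parents at random.
\item For mixed differences, Lemma~\ref{lem:gap} picks an interval $(K_a,K_{a+1}]$ containing no type. Then every long relation has degree $cn^{K_{a+1}+1}$, which exceeds the $O(n^{s-1+2L})$ sets within distance $2L$ of earlier component roots. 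This places the short-edge components in pairwise disjoint balls.
\end{enumerate}
Your proposal has no substitute for either step.
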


The proofs of tree poset results fall into two categories. Some of them \cite{BGW,BJ,Jetal,P1} follow and strengthen the original ideas of Bukh \cite{Bukh}: define a sequence $T=T_0\supset T_1\supset \dots \supset T_\ell$ of tree posets such that $T_\ell$ and $T_{j-1}\setminus T_j$ are chains for all $1\le j\le \ell$, and then embed $T_\ell$ first and then keep extending the embedding by adding the \textit{chain} corresponding to $T_{j-1}\setminus T_j$. Other proofs \cite{GNPV,LPW,PT} (that so far worked only for special classes of tree posets) used a "graph theoretic" approach: they extended the embedding by always adding an \textit{edge}. Our proof uses this second approach.

\medskip

The \textit{Lubell mass} of a family is defined as
\[
 \lambda_n(\A)=\sum_{A\in\A}\binom n{|A|}^{-1}.
\]
If $\mathscr C$ is a uniformly random maximal chain in $2^{[n]}$, then
$\lambda_n(\A)=\mathbb E|\A\cap\mathscr C|$ and
$\lambda_n(\A)\ge |\A|/\binom{n}{\lfloor n/2\rfloor}$. For sets $A,B\subseteq[n]$, write
$\dist(A,B)=|A\triangle B|$ for the Hamming distance of $A$ and $B$. Set $\mathcal B_R(A)=\{B\subseteq[n]:\dist(A,B)\le R\}$.
All asymptotic statements are as $n\to\infty$, with every parameter
other than $n$ fixed unless stated otherwise.

\section{Proof of Theorem \ref{thm:main}}

Let us describe briefly the main steps of the proof before getting into the details. The first step, Proposition \ref{prop:extraction}, is the completely new element, the other two steps, Lemma \ref{lem:bounded} and Lemma \ref{lem:gap} use the ideas of \cite{LPW}. A \textit{bipartite containment graph} $B=B(\A,\B)$ has vertex set $\A\cup \B$ with $A,B$ forming an edge if and only if $A\subseteq B$. Note that the definition is not symmetric, so $B(\A,\B)\neq B(\B,\A)$. Proposition \ref{prop:extraction} states that a family $\F$ of size $(h+\varepsilon)\binom{n}{\lfloor \frac{n}{2}\rfloor}$ contains subfamilies $\F_0,\F_1,\dots,\F_h$ such that the minimum degree in all $B_i=B(\F_{i-1},\F_i)$ is large. Previous proofs \cite{GNPV,LPW} could only establish that the minimum degree is large in all $\A$ parts or in all $\B$ parts and so those proofs only worked for \textit{monotone} tree posets (tree posets with a smallest or a largest element).

The second step, Lemma \ref{lem:bounded}, settles the case when in all graphs $B(\F_{i-1},\F_i)$ the edges correspond to pairs $A\subseteq B$ with bounded $|B\setminus A|$. Then Lemma \ref{lem:gap} handles the case where for some $i$ $B(\F_{i-1},\F_i)$ has large set differences on its edges. As mentioned before, the ideas of these lemmas are from \cite{LPW}.

\begin{proposition}\label{prop:extraction}
Fix integers $h\ge1$, $Z\ge2$, and a real $\eps>0$. There exist $c>0$
and $n_0$, depending only on $h,Z,\eps$, such that for $n\ge n_0$ every
family $\F\subseteq2^{[n]}$ with $|\F|\ge(h+\eps)\binom{n}{\lfloor n/2\rfloor}$ contains
nonempty, pairwise disjoint families
$\V_0,\ldots,\V_h$
and bipartite containment graphs $B_i=B(\V_{i-1},\V_i)$,
for $1\le i\le h$, with the following properties. Each graph has a type
$t_i\in\{1,\ldots,Z,+\}$, and:
\begin{enumerate}
 \item if $t_i=j\le Z$, all edges have size difference exactly $j$,
 and $B_i$ has minimum degree at least $cn^j$;
 \item if $t_i=+$, all edges have size difference greater than $Z$,
 and $B_i$ has minimum degree at least $cn^{Z-1}$.
\end{enumerate}
\end{proposition}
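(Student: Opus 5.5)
Trim $\F$ to the middle layers, find many maximal chains that meet $\F$ in at least $h+1$ sets, then prune away low-degree vertices while keeping these chains.

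\emph{Normalisation.} Discard all sets of size outside $[n/2-\sqrt{n}\log n,\,n/2+\sqrt n\log n]$. This costs $o\bigl(\binom{n}{\lfloor n/2\rfloor}\bigr)$ sets, and what remains has Lubell mass $\lambda_n(\F)\ge h+\eps/2$. For a uniformly random maximal chain $\mathscr C$, the variable $|\F\cap\mathscr C|$ is at most $O(\sqrt n\log n)$ but has mean at least $h+\eps/2$. I would first reduce to a family where $|\F\cap\mathscr C|$ is bounded, so that the chains hitting $\F$ in at least $h+1$ sets have probability at least $\eps'>0$.

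One option is to cap each chain's contribution at some constant $K$. Alternatively, split $\F$ into $L$ interleaved classes by size modulo $L$, with $L$ depending on $\eps$, and use averaging. In either case the aim is to fix a constant $K=K(h,\eps)$ and a set $\mathcal S$ of maximal chains with $\Pr(\mathscr C\in\mathcal S)\ge\eps'$, such that each $C\in\mathcal S$ meets $\F$ in $h+1$ chosen sets $F_0(C)\subsetneq\dots\subsetneq F_h(C)$.

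\emph{Types and the candidate families.} For each $C\in\mathcal S$ and each $i$, record the type $t_i(C)\in\{1,\dots,Z,+\}$, given by the size difference $|F_i|-|F_{i-1}|$, capped at $+$ once it exceeds $Z$. There are only $(Z+1)^h$ type vectors, so some vector $t$ is realised by a set $\mathcal S_t$ of chains of probability at least $\eps'/(Z+1)^h$. Let $\V_i^0=\{F_i(C):C\in\mathcal S_t\}$, and put an edge in $B_i$ between $F_{i-1}(C)$ and $F_i(C)$ for each chain $C\in\mathcal S_t$.

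Pairwise disjointness of the $\V_i$ is not automatic, since a set can occur as $F_i$ on one chain and as $F_j$ on another. I would fix it by choosing $F_0(C),\dots,F_h(C)$ canonically as the $h+1$ lowest sets of $\F\cap C$ and assigning each set one index. Concretely, colour each set of $\F$ with a random label in $\{0,\dots,h\}$ and keep only the chains whose chosen sets carry labels $0,1,\dots,h$ in order. This loses only a factor $(h+1)^{-(h+1)}$ in the probability.

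\emph{Weighting and pruning (main obstacle).} Weight a pair $(A,B)$ with $A\subseteq B$ and $|B\setminus A|=j$ by the probability that a random chain passes through both $A$ and $B$. Near the middle this is about $\binom{n}{|A|}^{-1}\binom{n-|A|}{j}^{-1}$, so for fixed $j$ a vertex needs about $n^j$ neighbours to carry mass comparable to its own Lubell weight. We then delete iteratively:
\begin{itemize}
\item any vertex of $\V_{i-1}$ whose degree in $B_i$ is below $cn^{t_i}$ (or below $cn^{Z-1}$ when $t_i=+$), and
\item any vertex of $\V_i$ with degree below that threshold in $B_i$, or in $B_{i+1}$ when it lies on the $\V_{i+1}$ side,
\end{itemize}
together with all chains through the deleted vertex. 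The point is that each deletion kills at most $c\,n^{j}\cdot\binom{n}{|A|}^{-1}\binom{n-|A|}{j}^{-1}\approx c\,j!\,\binom{n}{|A|}^{-1}$ chain-mass.

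Summing over all deleted vertices, the total lost mass is at most $O(c)\cdot\lambda_n(\F)\le O(c\,\sqrt n\log n)$ in the worst case. This is why the bounded-intersection normalisation is needed: there I can bound $\sum\lambda_n(\V_i^0)\le K$, so the loss is at most $O(cK)(Z+1)!$. Choosing $c$ small then leaves positive surviving mass, so the $\V_i$ are nonempty and all degree conditions hold simultaneously.

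In the $+$ type the neighbour counts range over differences larger than $Z$. A vertex $A$ with $d$ neighbours at differences $\ge Z+1$ carries mass at most roughly $d\,n^{-(Z+1)}\cdot(Z+1)!\binom{n}{|A|}^{-1}$. This is below the threshold with room to spare, and the weaker requirement $cn^{Z-1}$ absorbs the sum over the various differences $j>Z$ together with the polynomial factors.

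The delicate step is making this loss accounting rigorous for both endpoints of every edge simultaneously. I would handle it by charging each killed chain to the first vertex whose deletion removed it.
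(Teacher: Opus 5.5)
\textbf{There is a genuine gap in the normalisation step, and it cannot be repaired as stated.} Your intermediate goal is a constant $K$ and a set $\mathcal S$ of maximal chains with $\Pr(\mathscr C\in\mathcal S)\ge\eps'(h,\eps)$, each meeting (a subfamily of) $\F$ in $h+1$ sets. This is false in general.

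Here is a counterexample. Let $\F$ consist of the $h$ middle layers together with $\mathcal T$, the sets of the next $M=\lfloor n^{1/3}\rfloor$ layers that contain a fixed $s$-set $S_0$. Choose $s$ so that $M2^{-s}\approx 2\eps$. Then $|\F|\ge(h+\eps)\binom{n}{\lfloor n/2\rfloor}$, and even $\lambda_n(\F)\approx h+2\eps$ is bounded. However, a maximal chain meets $\F$ in more than $h$ sets only if it meets $\mathcal T$. That requires its member in the top layer of $\mathcal T$ to contain $S_0$, which has probability about $2^{-s}=O(\eps/M)=o(1)$. The same holds for every subfamily of $\F$, so neither option you mention works:
\begin{enumerate}
\item Capping gives $\mathbb E\min\{|\F\cap\mathscr C|,K\}\le h+O(K\eps/M)$.
\item Residue classes of sizes cannot help, since every subfamily already has the defect.
\end{enumerate}
Deleting sets to make $\lambda_n(\F)$ bounded is easy, but it does not address this. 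The excess Lubell mass can sit on a vanishing fraction of chains, each meeting $\F$ about $M$ times.

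\textbf{Consequence for your accounting.} With one $(h+1)$-tuple per chain, your initial surviving mass $\Pr(\mathcal S_t)$ is $O(\eps/M)$. Your loss bound is $O(c)\sum_i\lambda_n(\V_i^0)$, which is of order $c$ here, because the colour classes in the full layers carry Lubell mass of order $1$. So no fixed $c$ yields the contradiction. The proposition still holds for this $\F$; your accounting simply cannot detect it.

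\textbf{The missing idea: count with multiplicity rather than by the probability of chains.} The paper lists the members of the window family on each chain in increasing order. It cuts the list into consecutive disjoint blocks of length $h+1$ and calls a block good if its random colours read $0,1,\dots,h$. With $p=(h+1)^{-h-1}$, the expected number of good blocks is at least
$\frac{p}{h+1}(\lambda-h)\ge\frac{p\,\eps/2}{(h+1)(h+\eps/2)}\lambda$, which is linear in $\lambda$.

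Your charging bound is also linear in $\lambda$, so it goes through with killed blocks in place of killed chains:
\begin{enumerate}
\item A deleted vertex $A$ lies in at most one block of each chain.
\item Any still-alive block through $A$ passes through a current neighbour of $A$ in the deficient direction.
\item Hence the killed block mass is at most $cn^{j}\max_B\Pr(A,B\in\mathscr C)=O(c)\Pr(A\in\mathscr C)$. Summed over $A$, this is $O(c)\lambda$ per type vector.
\end{enumerate}
Both sides scale with $\lambda$, so a small constant $c=c(h,Z,\eps)$ gives the contradiction without any bounded-intersection normalisation. The paper phrases this as ``every good block contains a recorded edge''. It runs a separate pruning for each of the $(Z+1)^h$ type vectors, using all type-$t_i$ containments between colour classes $i-1$ and $i$.

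The rest of your plan is consistent with that proof once blocks replace ``the lowest $h+1$ sets of each chain'':
\begin{enumerate}
\item colour classes giving disjointness;
\item the pigeonhole over type vectors;
\item the $\binom{m}{Z+1}^{-1}$ pair bound justifying the $cn^{Z-1}$ threshold for type $+$;
\item charging each loss to the deleted vertex.
\end{enumerate}
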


\begin{proof}
Let
$\A=\{F\in \F: ||F|-n/2|<n^{2/3}\}$.
The binomial tail bound shows that  $|\F\setminus \A|\le |\{G\subseteq [n]:||G|-n/2|\ge n^{2/3}\}|=o(\binom{n}{\lfloor n/2\rfloor})$. Thus, for sufficiently large $n$,
\begin{equation}\label{eq:lambda}
 |\A|\ge(h+\frac{\varepsilon}{2})\binom{n}{\lfloor n/2\rfloor},\qquad
 \lambda:=\lambda_n(\A)\ge h+\frac{\varepsilon}{2}.
\end{equation}

Let $m=\lfloor n/3\rfloor$ and define
$D_j=\binom mj$ $(1\le j\le Z)$,
 $D_+=\frac1{n^2}\binom m{Z+1}$.
For every strict containment $A\subset B$ in $\A$, define its type by
\[
 \tau(A,B)=
 \begin{cases}
  |B|-|A|,&|B|-|A|\le Z,\\
  +,&|B|-|A|>Z.
 \end{cases}
\]

\medskip\noindent
Let $A,B\in\A$ be distinct comparable sets.
For a uniformly random maximal chain $\mathscr C$,
\[\ \Pr(B\in\mathscr C\mid A\in\mathscr C)=
 \begin{cases}
  \displaystyle\binom{n-|A|}{|B\setminus A|}^{-1},&A\subset B,\\[5pt]
  \displaystyle\binom{|A|}{|A\setminus B|}^{-1},&B\subset A.
 \end{cases}
\]The upper binomial parameter is at least $m$, while
$|A\triangle B|\le2n^{2/3}<m/2$ for sufficiently large $n$. Monotonicity of binomial
coefficients therefore gives, if the edge has type $t$,
\begin{equation*}\label{eq:conditional-bound}
 \Pr(B\in\mathscr C\mid A\in\mathscr C)\le D_t^{-1}.
\end{equation*}
For type $+$ the stronger bound
$\binom m{Z+1}^{-1}=(n^2D_+)^{-1}$ holds. In particular, if for some real $\theta$, fewer than
$\theta D_t$ edges of one type $t$ are assigned to $A$, their total
expected contribution to the number of assigned edges with both ends
on $\mathscr C$ is at most
\begin{equation}\label{eq:vertex-cost}
 \theta\Pr(A\in\mathscr C).
\end{equation}

\medskip\noindent
Color every member of $\A$ independently and uniformly with 
colors $0,\ldots,h$. For a maximal chain $\mathscr C$, list its members
of $\A$ in increasing order as
$A_1\subset\cdots\subset A_X,~ X=|\A\cap\mathscr C|$.
Partition this list into consecutive blocks of length $h+1$, ignoring
the final incomplete block. A block is \emph{good} if its colors, in
increasing order, are exactly $0,1,\ldots,h$.

Put $p=(h+1)^{-h-1}$. For every fixed maximal chain, each block is good with
probability $p$. If $g_\chi(\mathscr C)$ denotes the number of good
blocks under a coloring $\chi$, then
\begin{align}
 \mathbb E_\chi\mathbb E_{\mathscr C}g_\chi(\mathscr C)
 =p\,\mathbb E_{\mathscr C}\left\lfloor\frac{X}{h+1}\right\rfloor
 \notag\ge\frac{p}{h+1}\bigl(\mathbb E_{\mathscr C}X-h\bigr)
 =\frac{p}{h+1}(\lambda-h).\label{eq:color-average}
\end{align}
Here $\lfloor\frac{X}{h+1}\rfloor\ge \frac{X-h}{h+1}$ for every nonnegative integer $X$.
Fix a coloring $\chi$ attaining at least the average, so that
\begin{equation}\label{eq:good-lower}
 \mathbb E_{\mathscr C}g_\chi(\mathscr C)\ge\frac{p}{h+1}(\lambda-h).
\end{equation}
We aim to choose $\V_i$ from the $i$th color class.

\medskip\noindent
There are $W=(Z+1)^h$ type sequences
$\boldsymbol t=(t_1,\ldots,t_h)$. Choose the positive constant
$\theta=\frac{p\frac{\varepsilon}{2}}{2(h+1)W(h+\frac{\varepsilon}{2})}$.
For each sequence, start a fresh pruning process on all the initial
color classes. Between colors $i-1$ and $i$, use all containments of
type $t_i$. Repeatedly delete any set having fewer than
$\theta D_{t_i}$ current neighbors in one of its required adjacent
parts. A set in color class $i$ with $1\le i\le h-1$ has a requirement on each side; a set in color class 0 or $h$
has only its one adjacent requirement.

At each deletion, choose one deficient direction and record all edges
from the deleted set to its currently surviving neighbors in that
direction. Assign those recorded edges to the deleted set. Record
only this one direction, even if both are deficient. All choices are
made with the fixed coloring; the deletion order and recorded edge sets
do not depend on the maximal chain used in the expectations.

If a process terminates with any surviving sets, there are survivors
in every part: each survivor has a positive number of neighbors in every
required adjacent part, and this propagates along the path of parts.
Its surviving graphs have minimum degrees at least $\theta D_{t_i}$
at both ends. These give the required families $\V_0,\V_1,\dots,\V_h$ by the definition of $D_j$s and by $D_j=\Theta(n^j)$ and $D_+=\Theta(n^{Z-1})$.

Suppose, for a contradiction, that every process deletes all $\A$.
Let $E_{\boldsymbol t}$ be the recorded edges for one sequence, and let
$E=\bigcup_{\boldsymbol t}E_{\boldsymbol t}$. For any edge set $J$ put
\[
 N_J(\mathscr C)=|\{\{A,B\}\in J:A,B\in\mathscr C\}|.
\]
In a fixed process each set is deleted once, recording fewer than
$\theta D_t$ edges of one type. Each recorded edge is assigned to the
endpoint deleted when it was recorded. Summing
\eqref{eq:vertex-cost} over the sets of $\A$ yields
\[
 \mathbb E_{\mathscr C}N_{E_{\boldsymbol t}}(\mathscr C)
 \le\theta\sum_{A\in\A}\Pr(A\in\mathscr C)=\theta\lambda.
\]
An edge may occur in several sequences, which only increases the sum
on the right in the following union bound:
\begin{equation}\label{eq:recorded-upper}
 \mathbb E_{\mathscr C}N_E(\mathscr C)\le W\theta\lambda.
\end{equation}

\medskip\noindent
Consider a chain $A_0\subset A_1\subset\cdots\subset A_h$ with colors
$0,1,\ldots,h$, respectively. Its consecutive edges determine the
sequence $t_i=\tau(A_{i-1},A_i)$. In the process for this sequence,
consider the first of these $h+1$ sets to be deleted. All other
sets of the chain are still present. In the chosen deficient
direction the chain has an adjacent set, and their edge is therefore
recorded. Hence every such colored chain has at least one 
edge in $E$ that connects two of its consecutive sets.

In particular, every good block on $\mathscr C$ contains a recorded
edge. Different blocks live on disjoint subchains of $\mathscr C$, so these edges are
distinct. Thus $g_\chi(\mathscr C)\le N_E(\mathscr C)$ for every maximal
chain. By \eqref{eq:good-lower} and \eqref{eq:recorded-upper},
\begin{equation}\label{eq:contradiction}
 \frac{p}{h+1}(\lambda-h)\le W\theta\lambda.
\end{equation}
But \eqref{eq:lambda} implies
$\lambda-h\ge\frac{\varepsilon}{2}\lambda/(h+\frac{\varepsilon}{2})$, and consequently the left side
of \eqref{eq:contradiction} is at least
\[
 \frac{p\frac{\varepsilon}{2}}{(h+1)(h+\frac{\varepsilon}{2})}\lambda=2W\theta\lambda,
\]
contrary to \eqref{eq:contradiction}.

Some type sequence therefore has a nonempty surviving system. Its parts
are disjoint because they belong to different color classes. For large
$n$ and $1\le j\le Z+1$,
$\binom mj\ge(n/4)^j/j!$. Thus a common admissible constant in the
statement is, for example,
\[
 c=\frac{\theta}{4^{Z+1}(Z+1)!}.
\]
This constant depends only on $h,Z,\eps$.
\end{proof}

\begin{remark}\label{rem}
    The case $h=1$ of Proposition \ref{prop:extraction} is implicitly  proved in \cite{LPW} in a much simpler way. The case $h=2$ can also be proved along those lines: for a family $\F\subseteq \{G\subseteq [n]:||G|-n/2|\le n^{2/3}\}$ with $|\F|\ge (2+\varepsilon)\binom{n}{\lfloor \frac{n}{2}\rfloor}$ let us define $\F^2:=\{F\in \F: \lambda_{|F|}(\F_{F,-})\ge 1+\varepsilon/3\}$ and $\F^0:=\{F\in \F:\lambda_{n-|F|}(\F_{F,+})\ge 1+\varepsilon/3\}$, where $\F_{F,-}=\{F'\in \F:F'\subseteq F\}$ and $\F_{F,+}=\{F'\setminus F: F\subseteq F'\in \F\}$. A standard argument from \cite{GNPV} shows that $\lambda_n(\F\setminus \F^2),\lambda_n(\F\setminus \F^0)\le (1+\varepsilon/3)$ and thus $|\F^2|,|\F^0|\ge (1+2\varepsilon/3)\binom{n}{\lfloor \frac{n}{2}\rfloor}$. Therefore $\F^1=\F^2\cap \F^0$ has size at least $\frac{\varepsilon}{3}\binom{n}{\lfloor \frac{n}{2}\rfloor}$. All degrees of $\F^1$ to \textit{both} $\F^0$ and  $\F^2$ are high by the Lubell mass conditions, and the standard graph theory proof yields the desired subgraphs $B_1$ and $B_2$.
\end{remark}

\medskip

A finite poset $Q$ is \emph{saturated} if every maximal chain has
$h(Q)$ elements. For a saturated poset of height $t+1$, write
$Q_0,\ldots,Q_t$ for its natural levels, i.e. $Q_i$ is the set of minimal elements in $Q\setminus \cup_{j=0}^{i-1}Q_j$, and
let $\ell(u)=i$ for $u\in Q_i$. 
As a result of saturation, every Hasse edge joins consecutive
levels and is oriented upward. We use this notation for saturated tree
posets throughout the rest of the proof.

\begin{lemma}\label{lem:bounded}
Let $Q$ be a fixed saturated tree poset of height $t+1$, with levels
$Q_0,\ldots,Q_t$, and fix a root $y\in Q$. Let
$\V_0,\ldots,\V_t\subseteq2^{[n]}$ be nonempty, pairwise disjoint
families. For each $1\le i\le t$, fix a positive integer $j_i$ and a
bipartite graph $B_i=B(\V_{i-1},\V_i)$, all of whose edges
are containments $A\subset B$ with $|B|-|A|=j_i$. Suppose $B_i$ has
minimum degree at least $cn^{j_i}$, where $c>0$ is fixed, and give each edge of $B_i$ the weight $j_i$; thus
$w(uv)=j_{\max\{\ell(u),\ell(v)\}}$.

There are $\beta>0$ and $n_0$, depending only on the fixed data, such
that for $n\ge n_0$ and any prescribed $A\in\V_{\ell(y)}$, the
blow-up $Q(y,\lfloor\beta n\rfloor)$ has an injective order-preserving
embedding sending its root to $A$, with level $i$ represented in $\V_i$
and every edge between consecutive levels embedded using $B_i$.

Moreover, put
\[
 R=\max_{v\in Q}\sum_{e\in P(y,v)}w(e),
\]
where $P(y,v)$ is the unique root-to-$v$ path. The embedding can be
chosen in $\mathcal B_R(A)$, with pairwise disjoint sets of changed
coordinates along each root-to-vertex path. The same conclusion holds
for every integer $1\le b\le\beta n$ in place of
$\lfloor\beta n\rfloor$.
\end{lemma}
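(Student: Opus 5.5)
We embed $Q(y,b)$ greedily by breadth-first search from the root, and in addition we keep every root-to-vertex path ``coordinate-disjoint''. Map the root $y$ to the prescribed $A\in\V_{\ell(y)}$. Process the vertices of the blow-up in order of increasing distance from the root. When a vertex $v'$ (a copy of $v\in Q$) has been embedded as a set $S\in\V_{\ell(v)}$, we must embed its $b$ children for each Hasse neighbour $u$ of $v$ lying farther from $y$. Such a child is a copy of $u$, where $\ell(u)=\ell(v)\pm1$.

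Consider first the case $\ell(u)=\ell(v)+1=i$. Then each child must be a set $S'\in\V_i$ with $S\subset S'$ and $|S'\setminus S|=j_i$, that is, an edge of $B_i$ at $S$. The other case, $\ell(u)=\ell(v)-1$, is symmetric: we use an edge $S'\subset S$ of $B_{\ell(v)}$ with $|S\setminus S'|=j_{\ell(v)}$. In both cases we impose two further requirements on $S'$:
\begin{enumerate}
\item $S'$ is distinct from all sets used so far;
\item the changed coordinates $S\triangle S'$ are disjoint from the set $C(v')$ of all coordinates already changed along the path from the root to $v'$.
\end{enumerate}
The second requirement gives the path-disjointness claim. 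Since the symmetric differences along a path are disjoint, $\dist(A,\iota(v'))$ equals the sum of the weights along the path, which is at most $R$. This gives the containment in $\mathcal B_R(A)$.

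Order preservation then follows automatically, because every Hasse edge of the blow-up is realised by a containment in the correct direction. Injectivity is guaranteed by the first requirement. Level $i$ goes into $\V_i$, and the $\V_i$ are pairwise disjoint, so the levels cannot collide with one another.

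The main step is to count the forbidden choices at each extension. Fix $S$ with $\deg_{B_i}(S)\ge cn^{j_i}$. Here $|C(v')|\le R$, where $R$ is a constant. The neighbours $S'$ with $(S\triangle S')\cap C(v')\neq\emptyset$ are those for which one of at most $R$ fixed coordinates lies among the $j_i$ changed coordinates. There are at most $R\binom{n}{j_i-1}\le Rn^{j_i-1}$ of them, which is $o(cn^{j_i})$.

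The sets already used are at most $|Q(y,b)|\le |Q|\,b^{\operatorname{diam}}=O(b^{|Q|})$ in number. This is far too many to compare with $cn^{j_i}$ when $j_i=1$ and $b=\beta n$ with $|Q|\ge 2$. So a crude count fails, and we must restrict the count to sets that actually threaten $S$. The only used sets that can be neighbours of $S$ in $B_i$ are those within distance $j_i$ of $S$ in the appropriate level. Since neighbours are determined by their changed coordinate set, it suffices to bound how many embedded sets $T$ in $\V_i$ satisfy $|S\triangle T|=j_i$ with the containment in the right direction.

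We control this by a local argument. For two embedded vertices, write $w$ for their last common ancestor, and compare their coordinate changes relative to $\iota(w)$. Paths from $w$ to distinct children use different changed coordinate sets (by distinctness of the neighbours chosen). So the sets $S$ and $T$ are determined by disjoint-coordinate words of total weight at most $2R$. Two such sets at Hamming distance exactly $j_i$ must share all but boundedly many coordinates. This forces the vertex $T$ to lie within bounded tree distance of $v'$, with prescribed coordinates. Apart from $O(1)$ ``near'' configurations, each branch point contributes at most one coordinate collision per chosen coordinate. This gives at most $K b\, n^{j_i-1}$ conflicting used sets, for some constant $K=K(Q,R)$.

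We also need $b$ fresh children. Choosing them one at a time adds $b$ more exclusions, and $b\le \beta n\le \beta n^{j_i}$. Taking $\beta\ll c/K$, the total number of forbidden neighbours is at most $(R+K\beta n+\beta n)n^{j_i-1}<cn^{j_i}$, so a valid child always exists.

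I expect the main obstacle to be exactly this collision count: showing that previously embedded sets can block only $O(bn^{j_i-1})$ neighbours of $S$, rather than $O(|Q(y,b)|)$. If the structural argument proves too delicate, the first fallback is to strengthen the invariant. We would require that the changed coordinate sets at every branching are disjoint across all siblings and all their descendants, i.e.\ each vertex reserves fresh coordinates globally. This makes distinct vertices automatically distinct sets. But it needs about $b^{|Q|}$ coordinates, which is too many. So the realistic route is the local count, organised around the last common ancestor. The claim for every $1\le b\le\beta n$ follows from the same argument, since all the bounds are monotone in $b$.
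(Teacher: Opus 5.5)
Your greedy framework is sound as far as it goes. The freshness condition gives path-disjointness and the $\mathcal B_R(A)$ bound, and it costs only $R\binom{n}{j_i-1}$ neighbours. You also correctly see that comparing $cn^{j_i}$ with the total number of used sets fails.

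\textbf{The gap.} The step that carries the whole proof is only asserted: that at every moment at most $Kbn^{j_i-1}$ already used sets are $B_i$-neighbours of the current parent $S=\iota(v')$. The justification offered is not an argument. Every vertex of $Q(y,b)$ is within bounded tree distance of $v'$, so ``forces $T$ to lie within bounded tree distance'' carries no information. Splitting at the last common ancestor $w$ does not help by itself, because below $w$ there can be far more than $bn^{j_i-1}$ descendants whose coordinate sets you have not controlled. ``Each branch point contributes at most one collision per chosen coordinate'' is exactly the statement that needs proof. Your choices are arbitrary among valid candidates, so without a proved bound nothing rules out earlier choices clustering around a later $S$ and exhausting its neighbourhood. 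You flag this yourself as the main obstacle, and the proposal leaves it open.

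\textbf{How to fill it.} The bound is in fact true, and it can be proved by induction through parents. Let $q=|Q|$, write $X_T=\iota(T)\triangle A$, and let $g(Y,k)$ be the number of embedded $T$ with $X_T\supseteq Y$ and $|X_T|=|Y|+k$. Take such a $T$ with parent $P$ and chunk $C=\iota(T)\triangle\iota(P)$ of size $j'$, and put $D=Y\cap C$. Then $X_P\supseteq Y\setminus D$ and $|X_P|=|Y\setminus D|+(k-j'+|D|)$. Moreover, $P$ has at most $\min\{qb,n^{j'-|D|}\}$ children whose chunk contains $D$, since sibling chunks are distinct. Injectivity gives $g(Y,0)\le 1$, and induction on $(k,|Y|)$ then yields $g(Y,k)=O(bn^{k-1})$ for $1\le k\le R$. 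Summing $g(X_S\setminus W,\,j_i-|W|)$ over $W\subseteq X_S$ gives the needed $O(bn^{j_i-1})$ bound, and with it your argument goes through.

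\textbf{The paper's route.} The paper avoids this count altogether.
\begin{enumerate}
\item It colours $\mathcal B_R(A)$ randomly with the $q$ vertices of $Q$. Copies of different original vertices are then drawn from disjoint colour classes, so they can never collide. Chernoff still leaves each parent at least $cn^{j}/(4q)$ neighbours of the right colour.
\item It embeds all copies of one original vertex in a single batch and assigns each candidate $H$ to one random eligible parent. Since $P\triangle A\subseteq H\triangle A$, each candidate has at most $2^R$ eligible parents. Chernoff and a union bound then give every parent $b$ private candidates, so copies of the same original vertex cannot collide either.
\end{enumerate}
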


\begin{proof}
The case $|Q|=1$ is immediate. Suppose $q=|Q|\ge2$. Color every set in
$\mathcal B_R(A)\setminus\{A\}$ independently and uniformly with the
$q$ original vertices of $Q$, and give $A$ color $y$. 

Regard an edge as a rooted edge $uv$, with $u$ the parent, regardless
of its containment direction. Put $i=\max\{\ell(u),\ell(v)\}$ and
$j=j_i=w(uv)$. Thus $u,v$ lie on consecutive levels and their edge
in an embedding should use $b\le \lfloor \beta n\rfloor$ edges of $B_i$. For any $P\in\V_{\ell(u)}$ with $\dist(P,A)\le R-j$,
all its $B_i$-neighbors in $\V_{\ell(v)}$ lie in $\mathcal B_R(A)$.
Apart from possibly $A$,
their auxiliary colors are independent and uniform. Since there are
at least $cn^j$ neighbors, Chernoff's inequality gives probability
$e^{-\Omega(n^j)}\le e^{-\Omega(n)}$ of having fewer than $cn^j/(4q)$
neighbors of color $v$. There are only $O(n^R)$ sets in $\mathcal B_R(A)$ and
constantly many rooted edges. A union bound gives a coloring satisfying
all these requirements simultaneously.

For an original vertex $u\in Q$, put
\[
 s(u)=\sum_{e\in P(y,u)}w(e).
\]
We now start embedding $Q(y,b)$. We fix an ordering $y=u_1,u_2,\dots,u_q$ of $Q$ such that vertices of $P_{y,u_i}$ precede $u_i$ for all $i$. We first place $y$ to $A$ and then embed all copies of vertices in the fixed order. All copies of $u\in Q$ are embedded at once using only
sets of auxiliary color $u$. Whenever $H$ is chosen as a child of $P$,
require
\begin{equation}\label{eq:fresh}
 (H\triangle P)\cap(P\triangle A)=\varnothing.
\end{equation}
This implies
\begin{equation}\label{eq:disjoint-support}
 H\triangle A=(P\triangle A)\mathbin{\dot\cup}(H\triangle P).
\end{equation}
Thus all copies of $u$ have distance exactly $s(u)$ from $A$.

Fix a nonroot original vertex $v$, let $u$ be its parent, and suppose
all copies of $u$ are already embedded. Put $j=w(uv)$. Since $s(u)+j\le R$,
every parent set $P$ has at least $cn^j/(4q)$ neighbors of color $v$.
At most
\[
 R\binom n{j-1}=O(n^{j-1})
\]
of them change an element in $P\triangle A$, since $H$ is determined by $H\triangle P$ once $P$ is given. For large $n$,
each parent therefore has at least $cn^j/(8q)$ eligible candidates satisfying
\eqref{eq:fresh}.

For each distinct candidate $H$, assign it independently and uniformly
to one of its eligible parents. By \eqref{eq:disjoint-support}, any such
parent $P$ satisfies $P\triangle A\subseteq H\triangle A$. Since
$|H\triangle A|\le R$, a candidate has at most $2^R$ eligible parent
sets. Every parent receives an expected number of candidates at least
$cn^j/(8q2^R)$. Its received count is a sum of independent Bernoulli
variables, and Chernoff's inequality gives
\[
 \Pr\left(\text{it receives fewer than }\frac{c}{8q2^{R+1}}n
       \text{ candidates}\right)=e^{-\Omega(n)}.
\]
Choose $0<\beta\le\min\{1,c/(8q2^{R+2})\}$ and take any integer
$1\le b\le\beta n$. There are at most $qn^{q-1}$ parents in $Q(y,b)$.
A union bound therefore shows that an allocation giving every parent
at least $b$ candidates exists. Pick $b$ candidate for each parent.

Perform this allocation once for each original nonroot vertex $v$,
with all its copies allocated in a single batch. No set of color $v$
has been used at an earlier stage: $v$ has a unique original parent,
and different original vertices use different colors. The allocation
makes copies of $v$ distinct. This proves injectivity, and
\eqref{eq:disjoint-support} establishes the radius bound.

The estimates and the lower bound on $n$ are uniform over the prescribed
root $A$ and over host graphs satisfying the stated degree bounds.
In particular, the root may be selected using an earlier part of an
embedding. (This will be important in the proof of Lemma \ref{lem:gap}!) For $q\ge2$ one may take
\begin{equation}\label{eq:beta}
 \beta=\min\left\{1,\frac{c}{q\,2^{R+5}}\right\}.
\end{equation}
\end{proof}

\begin{lemma}\label{lem:gap}
Let $S$ be a fixed saturated tree poset of height $h+1\ge2$, with
 levels $S_0,\ldots,S_h$. Fix $x\in S$, put $s=|S|$, and define integers
\begin{equation}\label{eq:scales}
 K_0=1,\qquad K_{a+1}=2(s-1)K_a+s\quad(0\le a\le h),
 \qquad Z=K_{h+1}+2.
\end{equation}
Suppose there are nonempty, pairwise disjoint host families
$\V_0,\ldots,\V_h$ and graphs $B_i$ with the properties in
Proposition~\ref{prop:extraction}, for this $Z$ and some fixed $c>0$.

Then, for some $\delta>0$ and all sufficiently large $n$, $\cup_{i=0}^h\V_i$ contains a weak copy of
$S(x,\lfloor\delta n\rfloor)$, with level $i$ represented in $\V_i$.
The constants can be chosen uniformly over $x$ and all type sequences.
\end{lemma}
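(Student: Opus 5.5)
If every $t_i\le Z$, the statement is exactly Lemma~\ref{lem:bounded} applied with $Q=S$, $y=x$, $j_i=t_i$, after choosing an arbitrary root set $A\in\V_{\ell(x)}$. So the plan is to reduce the general case, where some levels have type $+$, to repeated applications of Lemma~\ref{lem:bounded}. The key point is that a $+$-edge $A\subset B$ has $|B\setminus A|>Z$. In that situation the bounded-radius embeddings of Lemma~\ref{lem:bounded}, which live in balls of radius at most $K_a$, cannot interfere across the gap, and comparabilities can be forced through the gap.

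\textbf{Decomposition.} I split the levels into maximal intervals of consecutive indices in which all $t_i$ are bounded types; the $+$ levels separate them. Each interval corresponds to a forest of components of $S$ restricted to those levels, and each such component is a saturated tree poset. I would embed $S(x,b)$ component by component, in BFS order from the component containing $x$. Each new component is attached to an already embedded one through a single Hasse edge $uv$ crossing a $+$ level.

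\textbf{Crossing a $+$ level.} Suppose $P$ is an already embedded copy of $u$ (say $u$ is below, so $P\in\V_{i-1}$). Then $P$ has at least $cn^{Z-1}$ neighbours $H\in\V_i$ with $|H\setminus P|>Z$. I need $b$ of them, for each copy $P$, chosen so that the entire new component, embedded around each $H$ via Lemma~\ref{lem:bounded} inside $\mathcal B_{R}(H)$, produces sets that are
\begin{itemize}
\item[(i)] comparable in the correct direction with the relevant already placed sets, which holds automatically because only Hasse edges matter and these are supplied by $B_i$ or by Lemma~\ref{lem:bounded}; and
\item[(ii)] distinct from everything used so far.
\end{itemize}
Distinctness is the real issue. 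The scales $K_a$ are designed so that the radius of the component embedded at stage $a$ is at most $K_a$, and $K_{a+1}=2(s-1)K_a+s$ accounts for a path of at most $s-1$ edges, each traversing a radius-$K_a$ ball twice, plus $s$ unit steps. This guarantees that everything embedded "near" one $+$-endpoint stays within distance $K_{h+1}<Z$ of it.

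Hence sets built from different $+$-neighbours $H\ne H'$ could only collide if $\dist(H,H')\le 2K_{h+1}$. I therefore pick the $b$ neighbours $H$ greedily with pairwise distance greater than $2K_{h+1}$, and also at distance greater than $2K_{h+1}$ from all previously used sets. Each used set forbids $O(n^{2K_{h+1}})$ candidates, and we have $O(n)$ parents times $b\le\delta n$ used sets. Against $cn^{Z-1}$ candidates this is negligible only if $Z-1>2K_{h+1}+1$ in exponent bookkeeping, and I expect this is where the exact choice $Z=K_{h+1}+2$ and the stated $K_a$ recursion must be used more cleverly.

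\textbf{Expected obstacle.} The main obstacle is making the counting in the crossing step work with only $Z-1=K_{h+1}+1$ degrees of freedom rather than $2K_{h+1}$. My first attempt would be to exclude only candidates $H$ with $|H\triangle G|\le K_{h+1}$ for used sets $G$ on the same side. Sets on the same side of $+$ edges differ from $P$ only inside $H\setminus P$, which is a set of size greater than $Z$, so a collision forces $H$ to share almost all of its changed coordinates with another used set. I would then count, using the disjoint-support property \eqref{eq:disjoint-support}, that fixing a used set determines $H$ up to $O(n^{K_{h+1}})$ choices, which is $o(n^{Z-1})$. Finally, uniformity of $\beta$ in Lemma~\ref{lem:bounded} over the root gives a common $\delta=\min\{\beta,c/2\}$ over all $x$ and type sequences.
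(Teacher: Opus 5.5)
You handle the case where every $t_i\le Z$ correctly, but the general reduction has a genuine gap. You cut $S$ only along $+$ levels and embed everything between them with Lemma~\ref{lem:bounded}. Bounded types range over $1,\dots,Z$, so such a component (e.g.\ one built from type-$Z$ edges) can have weighted radius as large as $(s-1)Z$. Your assertion that everything near a $+$-endpoint stays within distance $K_{h+1}<Z$ is therefore false; the recursion for $K_a$ does not bound the radii of the components you form.

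You also undercount the used sets: $S(x,b)$ has up to $sb^{s-1}=\Theta(n^{s-1})$ elements, not $O(n)\cdot b$. Hence keeping a new ball $\mathcal B_R(H)$ clear of earlier ones means avoiding roughly $n^{s-1+2R}$ candidates with $R$ up to $(s-1)Z$, while a $+$-edge supplies only $cn^{Z-1}$. Your fallback in the last paragraph inherits both errors. Even granting $O(n^{K_{h+1}})$ excluded candidates per used set, the total $n^{s-1}\cdot n^{K_{h+1}}$ is not $o(n^{Z-1})=o(n^{K_{h+1}+1})$.

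The missing idea is a pigeonhole over scales, which is also what the recursion is for. The $h+1$ intervals $(K_0,K_1],\dots,(K_h,K_{h+1}]$ are disjoint, and at most $h$ of the $t_i$ are bounded, so some interval $(K_a,K_{a+1}]$ contains none of the bounded types. Call an edge short if its type is at most $K_a$ and long otherwise. A long edge is then of type $+$ or of bounded type $>K_{a+1}$. In both cases its graph has minimum degree at least $cn^{K_{a+1}+1}$, since $Z-1=K_{h+1}+1\ge K_{a+1}+1$. In particular, large bounded types must be cut as well, not only the $+$ levels.

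Deleting the long edges leaves saturated components of weighted radius at most $L=(s-1)K_a$. Each is embeddable by Lemma~\ref{lem:bounded} inside $\mathcal B_L(A)$ around its root. Choose every new component root, reached across a long edge, at Hamming distance greater than $2L$ from all previously chosen roots; this makes the balls disjoint. It forbids $O(n^{s-1}\cdot n^{2L})$ candidates against $cn^{K_{a+1}+1}=cn^{s+1+2L}$ available ones. The term $2(s-1)K_a$ in $K_{a+1}$ pays for the ball diameter $2L$, and the term $s$ pays for the $\Theta(n^{s-1})$ roots.
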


\begin{proof}
There are $h+1$ disjoint intervals $(K_0,K_1],\ (K_1,K_2],\ldots,(K_h,K_{h+1}]$,
but at most $h$ bounded types among the graphs $B_1,\ldots,B_h$.
Choose $a\in\{0,\ldots,h\}$ for which $(K_a,K_{a+1}]$ contains none
of these bounded types. Call a relation of $S$ \emph{short} if it has bounded type $j\le K_a$, and
\emph{long} otherwise. Because the chosen interval is empty, every long
relation has either bounded type $j>K_{a+1}$ or type $+$. Its minimum
degree is at least $cn^{K_{a+1}+1}$.
For type $+$ this follows from
$Z-1=K_{h+1}+1\ge K_{a+1}+1$.

The idea is to break up $S$ into components with only short edges and embed them using Lemma~\ref{lem:bounded}. Then for long edges use the fact that the minimum degrees in the corresponding $B_i$ are very large to obtain that the centers of the balls used by the embeddings guaranteed by Lemma~\ref{lem:bounded} can be placed  so far from each other that the balls are pairwise disjoint and therefore the whole embedding is injective.

\smallskip

Delete from $H(S)$ all edges of long relations. Each subposet $Q$ of $S$ corresponding to a component of the remainder lies in a maximal consecutive block of levels $p,\ldots,q$ whose
intervening relations are all short. $Q$ is itself
saturated, of height $q-p+1$. Indeed, saturation of $S$ allows every
vertex of $Q$ to be extended downward to level $p$ and upward to level
$q$ along a maximal chain of $S$. These chains use only short
edges and therefore stay in $Q$. Consequently every maximal chain of
$Q$ starts on level $p$, ends on level $q$, and has $q-p+1$ elements.
After shifting level indices, the natural levels of $Q$ are
$Q\cap S_p,\ldots,Q\cap S_q$, with host families
$\V_p,\ldots,\V_q$. Every edge of $B_{p+1},\dots,B_q$ uses a bounded size difference at most
$K_a$, so Lemma~\ref{lem:bounded} applies in precisely this setting.

Root $S$ at $x$, and for every component $Q$ give $Q$ the unique root $y \in Q$ closest to $x$.
Put $L=(s-1)K_a$. Remember that the weight of an edge in $B_i$ is the type of $B_i$.
As all its edges are short, every weighted root-to-vertex path in $Q$ has total weight at most $L$.
Lemma~\ref{lem:bounded} embeds $Q(y,b)$ from any prescribed root set $A\in \V_{\ell(y)}$
inside $\B_L(A)$, for $b=\lfloor\delta n\rfloor$
and sufficiently small fixed $\delta>0$.
This choice can be made uniformly. For example, since $|Q|\le s$ and
its weighted radius is at most $(s-1)Z$, \eqref{eq:beta} permits
\begin{equation}\label{eq:delta}
 \delta=\min\left\{\frac12,\frac{c}{s\,2^{(s-1)Z+6}}\right\}.
\end{equation}
Isolated components need no restriction. There are only finitely many
rooted component types and bounded weight assignments under
consideration, so a common lower bound on $n$ also suffices.

Deleting the corresponding long edges from $S(x,b)$ leaves components
of the form $Q(y,b)$. Indeed, every component root is either the root
of $S(x,b)$ or a child reached through a long edge; every remaining
original edge still supplies exactly $b$ children away from its
component root. Joining these components to a root in the direction inherited from the deleted long edge gives a rooted tree.

Now we start embedding $(x,b)$. Fix an ordering $x=y_1,y_2,\dots,y_k$ of the roots of all components $Q$ such that $\dist_{H(S)}(x,y_i)\le \dist_{H(S)}(x,y_j)$ for all $i\le j\le k$. A root $y_i$ should have $b^{\dist_{H(S)}(x,y_i)}$ copies. We intend to embed them such that all image sets $A^\alpha_i$ ($1\le i\le k$, $1\le \alpha \le b^{\dist_{H(S)}(x,y_i)}$) of all roots have 
pairwise Hamming distance greater than $2L$. Then the balls $\B_L(A^\alpha_i)$  are
pairwise disjoint. Start at any set in $\V_{\ell(x)}$, and embed the blow-up of the entire
short-edge component of $x$ using Lemma~\ref{lem:bounded}.

Suppose we have embedded all the blow-ups of all components with roots $y_1,\dots,y_{i-1}$. Then for $y_{i}$ there is a unique  $v\in S$ a neighbor of $y_i$ in $H(S)$ that lies on the path from $y_i$ to $x$. Then, by definition of the roots and the order $y_1,\dots,y_k$, all required copies of $v$ in $S(x,b)$ have already been embedded, and $y_iv$ is a long edge of $S$. Writing
$d=\dist_{\HH(S)}(x,v)$, in $S(x,b)$ there are
$b^d=\Theta(n^d)$ copies of $v$, and each has $b$ children that are copies of $y_i$.
All these $b^{d+1}=\Theta(n^{d+1})$ children are roots of new
components. The parent copies may lie in fewer than $b^d$ old
components, since an old component can contain several copies of $v$.
We attach the new components \emph{one by one}. At each choice the condition of having Hamming distance greater than $2L$ is imposed against every component root selected
so far, including all roots already selected for copies of $y_i$.

Suppose a new component must be attached by the long edge $y_iv$ to an already
embedded parent set $P$, the image of a copy of $v$. The corresponding graph $B_j$ provides at least
$cn^{K_{a+1}+1}$ candidate root sets, by its minimum degree condition, in the required
containment direction. The total number of already selected roots is at most
\begin{equation}\label{eq:total-size}
 |S(x,b)|=\sum_{u\in S}b^{\dist_{H(S)}(x,u)}
 \le sb^{s-1}\le sn^{s-1}.
\end{equation}
Each forbids at most
$|\mathcal B_{2L}(A^\alpha_i)|=\sum_{i=0}^{2L}\binom ni=O(n^{2L})$
candidates, so the total number of forbidden sets is $O(n^{s-1+2L})$, whereas
\[\label{eq:exponent-gap}
 K_{a+1}+1=2(s-1)K_a+s+1=s+2L+1.
\]
Consequently a candidate $A$ outside all the forbidden balls exists.

Choose $A$ as the new component root and apply Lemma~\ref{lem:bounded}
inside $\B_L(A)$. This ball is disjoint from every previously
used component ball, so the component embedding uses no previously
chosen set. 
Lemma~\ref{lem:bounded} is uniform over the prescribed root, so this
adaptive choice causes no additional condition.

Repeat for every required long child. The bound
\eqref{eq:total-size} remains valid throughout. Embeddings are injective
within components by Lemma~\ref{lem:bounded}, and between components
by disjointness of their balls. Short edges are respected by the local
embeddings, and long edges by the choice of component roots.

Finally, $s,Z$ are independent of $x$, and \eqref{eq:delta} is uniform
over $x$ and the type sequence. The remaining lower bounds on $n$ can
be made uniform by taking a maximum over finitely many possibilities.
\end{proof}

To  prove Theorem~\ref{thm:main}, we will need to show that the blow-up of a tree poset is a subposet of the blow-up of a saturated tree poset. This is mainly a result of Bukh from his paper \cite{Bukh}. For this reason we just sketch the proof of the following lemma. A poset $(P,\preceq)$ is a \textit{strong subposet} of $(Q,\preceq')$ if there exists an injection $\iota: P\rightarrow Q$ with $p\preceq p'$ if and only if $\iota(p) \preceq' \iota(p')$. A family $\G$ of sets is a \textit{strong copy} of $(P,\preceq)$ if there exists a bijection $\iota: P\rightarrow \G$ such that $p\preceq p'$ if and only if $\iota(p)\subseteq \iota(p')$. 

\begin{lemma}\label{satu}\
\begin{enumerate}
    \item 
    \cite[Lemma~5]{Bukh} For any tree poset $T$ there exists a saturated tree poset $S$ with $h(T)=h(S)$ such that $T$ is a strong subposet of $S$.
    \item 
    If $x\in T\subseteq S$ for tree posets as in part (i), then $T(x,b)$ is a strong subposet of $S(x,b)$ for any $b\ge 1$.
\end{enumerate}
\end{lemma}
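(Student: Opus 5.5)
The plan is to take part (i) directly from Bukh and prove part (ii) by a tree-subdivision argument. First I would make explicit what Bukh's construction in \cite[Lemma~5]{Bukh} gives. $S$ is obtained from $T$ by two operations. The first replaces some Hasse edges $uv$ of $T$ by longer monotone paths, i.e.\ chains whose new interior elements lie strictly between $u$ and $v$. The second attaches pendant chains so that every maximal chain reaches the common height $h(T)$.

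Consequently the inclusion $T\subseteq S$ has the following structural property. Every Hasse edge $uv$ of $T$ with $u\prec v$ corresponds to a directed path $u=w_0\to w_1\to\dots\to w_r=v$ in $\overrightarrow{H}(S)$ whose interior vertices are not in $T$. Moreover, these paths are internally disjoint, so $H(T)$ is realized as a subdivided subtree of $H(S)$. I would re-derive this from Bukh's proof if it is not literally stated there. This is the step I regard as the main obstacle, since part (ii) genuinely needs the subdivision form, not merely that $T$ is a strong subposet of $S$.

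The second ingredient is the basic fact that in a tree poset $p\preceq q$ holds if and only if the unique path from $p$ to $q$ in the undirected Hasse diagram is directed upward throughout. Indeed, a chain $p\prec z_1\prec\dots\prec q$ of covers is a path in the tree, and that path is unique. Hence, for tree posets, an injective map that sends Hasse edges to directed paths of the same orientation, with internally disjoint images, is a strong embedding. The reason is that the image of any path is the concatenation of the image paths, and concatenation preserves and reflects being directed upward, because reversing direction can only happen at a shared vertex.

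Now I would embed $T(x,b)$ into $S(x,b)$ by breadth-first search from the root, mapping each copy $u^i$ to some copy of $u$ in $S(x,b)$.
\begin{enumerate}
\item Send the root $x$ to the root $x$.
\item Suppose $u^i$ is mapped to a copy $\tilde u$ of $u$ in $S(x,b)$, and $v$ is a child of $u$ in $T$ rooted at $x$, with $S$-path $u=w_0,w_1,\dots,w_r=v$. Since $x\in T$ and the paths are internally disjoint, the distances in $H(S)$ from $x$ increase along this path.
\item The $b$ children $v^{i,1},\dots,v^{i,b}$ are sent as follows. In $S(x,b)$, $\tilde u$ has exactly $b$ children that are copies of $w_1$, and these go one to each $v^{i,k}$. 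From each of them, choose one child copy of $w_2$, then one copy of $w_3$, and so on, ending at a copy of $v$.
\end{enumerate}
The orientation of each step of these paths is inherited from $S$, hence it agrees with the orientation of $uv$. The resulting paths in $H(S(x,b))$ are internally disjoint and have distinct endpoints. This holds because branches of distinct $T$-children $v$ leave $\tilde u$ through copies of distinct original vertices $w_1$, and branches for the same $v$ leave through distinct copies of $w_1$. Since everything lives in a tree, the map is injective.

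Applying the tree-poset comparability criterion to $H(T(x,b))$ and $H(S(x,b))$ then shows that the map is a strong embedding, proving (ii). The only points needing care are the following.
\begin{enumerate}
\item The chosen intermediate copies are never reused. This is automatic because they lie in distinct subtrees of the rooted tree $S(x,b)$.
\item $b\ge1$ is arbitrary. This causes no problem, since each step only uses that every vertex of $S(x,b)$ has exactly $b$ children of each child type.
\end{enumerate}
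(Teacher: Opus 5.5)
Your proposal is correct and follows essentially the same route as the paper. The paper writes out the construction explicitly: it subdivides each arc $\overrightarrow{uv}$ of $\overrightarrow{H}(T)$ with $\ell_T(v)-\ell_T(u)-1$ new vertices and appends upward paths at maximal elements below the top level, which settles the step you flagged as the main obstacle; it then maps copies of each $u\in T$ breadth-first to copies of $u$ in $S(x,b)$, choosing among the $b^{r}\ge b$ terminal copies of each subdivided path. The only difference is that you route the $b$ children through distinct copies of $w_1$ and check strongness via the monotone-path criterion, whereas the paper takes any $b$ terminal copies and simply asserts that the result is a strong embedding.
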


\begin{proof}[Sketch of proof]
    Consider the level partition $Q_0,Q_1,\dots Q_{h-1}$ of $T$ and write $\ell_T(u)=i$ if $u\in Q_i$ as before. For every arc $\overrightarrow{uv}$ of $\overrightarrow{H}(T)$ subdivide the arc by $\ell_T(v)-\ell_T(u)-1$ vertices and if $u\in T$ is a maximal vertex with $\ell_T(u)<h-1$, then append an upward directed path with $h-1-\ell_T(u)$ vertices to $u$. The resulting poset $S$ is clearly a tree poset with $h(S)=h(T)$, and $\ell_T(u)=\ell_S(u)$ for all $u\in T$. Also, by construction, for any pair $u,u'\in T$, we have $\dist_{H(T)}(u,u')\le \dist_{H(S)}(u,u')$.

    To see part (2), one uses this distance property. We fix an order $x=u_1,u_2,\dots,u_{|T|}$ of $T$ such that $i<j$ implies $\dist_{H(T)}(u_i,y)\le\dist_{H(T)}(u_j,y)$. We embed copies of $u_i$ in $T(y,b)$ into copies of $u_i$ in $S(y,b)$. If all copies of all $u_i$ have been embedded for all $i<j$, then let $u_{i_j}$ be the neighbor of $(u_j$ on the $u_jx$-path in $H(T)$. The edge $u_{i_j}u_j$ of $T$ is replaced in $S$ by a path of length $\dist_{H(S)}(u_{i_j},u_j)\ge1.$
    For every already embedded copy of $u_{i_j}$, the corresponding rooted path in $(x,b)$ has $b^{\dist_{H(S)}(u_{i_j},u_j)}\ge b$ distinct terminal copies of $u_j$. Moreover, the sets of terminal copies arising from distinct copies of $u_{i_j}$ are disjoint. We may therefore select $b$ of them for every copy of $u_{i_j}$, obtaining the required copies of $u_j$. Repeating this inductively gives a strong embedding of $T(x,b)$ into $S(x,b)$.
\end{proof}

\begin{proof}[Proof of Theorem~\ref{thm:main}]
If $h(T)=1$, the tree poset has one vertex, and the conclusion is
immediate for sufficiently large $n$. Suppose $h(T)=h+1\ge2$.

By Lemma~\ref{satu}, it suffices to prove the assertion for the
saturated tree poset $S$.
Choose $Z$ from \eqref{eq:scales} for this $S$ and $h$. Apply
Proposition~\ref{prop:extraction} to $\F$, obtaining compatible host
graphs with a constant $c=c(\eps,h,Z)>0$. Lemma~\ref{lem:gap} embeds
$S(x,\lfloor\delta n\rfloor)$ for some fixed $\delta>0$, and then 
$T(x,\lfloor\delta n\rfloor)\subseteq S(x,\lfloor\delta n\rfloor)$.

The saturated extension $S$ and the parameters $h,|S|,Z$ depend only on $T$.
The constant of Proposition~\ref{prop:extraction} depends only on $\eps,h,Z$, and the constants of Lemma~\ref{lem:bounded} and Lemma~\ref{lem:gap} are uniform over the finitely many original roots $x\in T$.
This gives $\delta$ and $n_0$ depending only on $T,\eps$, as required.
\end{proof}

\section{Corollaries}

In this section we gather some known and new consequences of Theorem~\ref{thm:main}. First, 
let $\Forb(n,T)$ denote the collection of weak $T$-free subfamilies of
$2^{[n]}$. As all subfamilies of a $T$-free family are $T$-free, we immediately obtain $|\Forb(n,T)|\ge 2^{\La(n,T)}=2^{(h(T)-1+o(1))\binom{n}{\lfloor \frac{n}{2}\rfloor}}$. The next theorem was already proved in \cite{BGW} and in a stronger form in \cite{Jetal}, but it is immediately implied by Theorem~\ref{thm:main} and a result from \cite{PT}.

\begin{theorem}\label{thm:counting}
For every fixed tree poset $T$ of height $h$,
\[
    |\Forb(n,T)|
    =2^{(h-1+o(1))\binom{n}{\lfloor \frac{n}{2}\rfloor}}.
\]
\end{theorem}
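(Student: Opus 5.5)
The lower bound is already noted above, so only the upper bound $|\Forb(n,T)|\le 2^{(h-1+o(1))\binom{n}{\lfloor n/2\rfloor}}$ needs proof. I would use the container-type counting lemma from \cite{PT}. As I recall it, that result says the counting statement follows for a tree poset $T$ from a robust supersaturation hypothesis. The hypothesis is that for every $\eps>0$ there is $\delta>0$ such that every family of size $(h-1+\eps)\binom{n}{\lfloor n/2\rfloor}$ contains a copy of $T(x,\lfloor\delta n\rfloor)$ for a suitable root $x$. This hypothesis is exactly Conjecture~\ref{conj}, which Theorem~\ref{thm:main} establishes. So the plan is to check that the hypotheses of the \cite{PT} reduction match the conclusion of Theorem~\ref{thm:main}, and then apply it.

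To see why this works (in case the reduction must be reproved), I would build a container algorithm. Take a $T$-free family $\G$ and a family $\F\supseteq\G$ of size at least $(h-1+\eps)\binom{n}{\lfloor n/2\rfloor}$. Theorem~\ref{thm:main} gives a copy of $T(x,\lfloor\delta n\rfloor)$ in $\F$, with $x$ chosen as a leaf of $T$. Since $\G$ is $T$-free, $\G$ cannot contain the root image $R$ together with a full copy of $T$ hanging inside the blow-up. The blow-up contains many copies of $T-x$ through $R$'s neighbour, so either $R\notin\G$, or some element of a small witness set is missing from $\G$.

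Iterating gives a fingerprint/container scheme. Each step either removes a set from the current container or fixes a small certificate. Because the blow-up has branching $\delta n$, a single fixed set forces the exclusion of $\Omega(n)$ sets, so fingerprints have size $o(\binom{n}{\lfloor n/2\rfloor})$. The number of containers is then $2^{o(\binom{n}{\lfloor n/2\rfloor})}$, each of size at most $(h-1+\eps)\binom{n}{\lfloor n/2\rfloor}$, and summing $2^{|\text{container}|}$ over containers gives the bound once $\eps\to0$.

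I expect the main obstacle to be the bookkeeping in this encoding step. One has to show the fingerprints really are sublinear in $\binom{n}{\lfloor n/2\rfloor}$, and that the linear blow-up parameter $\lfloor\delta n\rfloor$ is what makes this possible. If \cite{PT} states the reduction with precisely this hypothesis, as I believe it does, this obstacle disappears, and the proof reduces to citing that result together with Theorem~\ref{thm:main}.
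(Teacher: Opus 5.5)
Your proposal is correct and is the paper's argument: the paper proves the theorem by citing Theorem~1.5 of \cite{PT}, whose hypothesis is the linear blow-up statement for $T$ and at least one root $x$, and Theorem~\ref{thm:main} supplies that hypothesis (in fact for every root). One correction to your optional reproof sketch: fingerprints of size merely $o\bigl(\binom{n}{\lfloor n/2\rfloor}\bigr)$ would not by themselves bound the number of containers. What the $\delta n$ branching gives is fingerprints of size $O(2^n/n)$, since each fingerprint step adds at most $|T|-1$ sets and deletes at least $\delta n$, and then $\binom{2^n}{O(2^n/n)}=2^{O(2^n\log n/n)}=2^{o(\binom{n}{\lfloor n/2\rfloor})}$.
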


\begin{proof}
Theorem~1.5 of Patk\'os and Treglown~\cite{PT} states that if the linear
blow-up conjecture holds for a tree poset $T$ and at least one root
$x\in T$, then
\[
    |\Forb(n,T)|=2^{(h(T)-1+o(1))\binom{n}{\lfloor \frac{n}{2}\rfloor}}.
\]
Theorem~\ref{thm:main} establishes their hypothesis for every
tree poset and, in fact, for every choice of the root.  The theorem is
therefore an immediate consequence of their result.
\end{proof}

\medskip

Next we consider the random version of the forbidden subposet problem. Let $\Pp(n,p)$ be the random subfamily of $2^{[n]}$ obtained by retaining
every set independently with probability $p$.  Write
$\La(\Pp(n,p),T)$ for the largest size of a $T$-free subfamily of
$\Pp(n,p)$. The next theorem was proved in a stronger form, but with completely different proof in \cite{Jetal}.

\begin{theorem}\label{thm:random}
Let $T$ be any fixed tree poset of height $h\geq 2$.  If
$p=p(n)$ satisfies $pn\to\infty$, then, with high probability,
\[
  \La(\Pp(n,p),T)
  =\bigl(h-1+o(1)\bigr)p\binom{n}{\lfloor \frac{n}{2}\rfloor}.
\]
\end{theorem}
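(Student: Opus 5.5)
The plan is to prove the lower and upper bounds separately; the upper bound is where Theorem~\ref{thm:main} enters.

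\emph{Lower bound.} Let $\mathcal M$ be the union of the $h-1$ middle layers of $2^{[n]}$. It is $T$-free, since $e(T)=h(T)-1$. Its size is $(h-1+o(1))\binom{n}{\lfloor n/2\rfloor}$. The intersection $\mathcal M\cap\Pp(n,p)$ is a $T$-free subfamily of $\Pp(n,p)$, and its size is binomially distributed. Because $p\binom{n}{\lfloor n/2\rfloor}\to\infty$, Chernoff's inequality shows that with high probability it has size $(1+o(1))p|\mathcal M|$. This gives the lower bound.

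\emph{Upper bound.} I use a supersaturation and container-type argument driven by the blow-up. Fix $\eps>0$, and let $\delta$ be given by Theorem~\ref{thm:main} for $T$ and $\eps/2$, with any root $x$. Choose the root $x$ to be a leaf of $T$, so that $T(x,b)$ contains many copies of $T$ sharing little.

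First I derive a robust supersaturation statement from Theorem~\ref{thm:main}. Every family $\G\subseteq 2^{[n]}$ with $|\G|\ge (h-1+\eps/2)\binom{n}{\lfloor n/2\rfloor}$ contains a copy of $T(x,\lfloor\delta n\rfloor)$. Fix such a copy and fix the image of the root. At each leaf-level vertex there are $\lfloor \delta n\rfloor$ choices, so the blow-up contains $\Omega(n)$ copies of $T$. These copies all agree except at one element, the image of some fixed leaf $u\neq x$, or more simply at the top layer of the blow-up.

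The consequence I want is the following. For any $\G$ of size at least $(h-1+\eps/2)\binom{n}{\lfloor n/2\rfloor}$, there is a ``core'' $C$ consisting of $|T|-1$ sets of $\G$, together with a set $N\subseteq\G$ of at least $\delta n/2$ ``extenders''. Each extender completes $C$ to a copy of $T$.

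I then run a greedy container or fingerprint algorithm. Start from a $T$-free family $\mathcal I\subseteq\Pp(n,p)$ of size at least $(h-1+\eps)p\binom{n}{\lfloor n/2\rfloor}$. Maintain a ``forbidden'' family $\mathcal D$, initially empty, and an available family $\G$, initially $2^{[n]}$. While $|\G|\ge (h-1+\eps/2)\binom{n}{\lfloor n/2\rfloor}$, find a core $C$ with extender set $N$ inside $\G$. If $C\subseteq \mathcal I$, then $\mathcal I$ must avoid $N$, so $N$ is removed from $\G$. Otherwise some set of $C$ is not in $\mathcal I$, and that set is removed from $\G$.

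The fingerprint that determines the container records only the cores that lie in $\mathcal I$. The key counting step is that each ``core in $\mathcal I$'' step removes $\Omega(n)$ sets from $\G$. Since $\G$ can lose at most $2^n$ sets in total, the number of such steps is at most $O(2^n/n)$. The sets of those cores are members of $\mathcal I$.

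Then take a union bound over fingerprints $\mathcal S$ of size $k=O(2^n/n)$, which is the part I expect to be hardest to make precise. For each fingerprint, one needs $\mathcal S\subseteq\Pp(n,p)$, which has probability $p^{|\mathcal S|}$. One also needs $\Pp(n,p)$ to have at least $(\eps/2)p\binom{n}{\lfloor n/2\rfloor}$ elements inside the final container, beyond the part accounted for in the bound. Here the size of the final container is at most $(h-1+\eps/2)\binom{n}{\lfloor n/2\rfloor}$ plus the steps that removed single sets. To handle this I would modify the algorithm so that the ``not in $\mathcal I$'' steps are not recorded, and sets removed that way are simply outside $\mathcal I$. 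The container is then $\G_{\mathrm{final}}$ together with those removed sets, and $\mathcal I$ is contained in $\G_{\mathrm{final}}\cup\mathcal S$.

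The number of fingerprints is $\binom{2^n}{\le k}\le 2^{O(2^n\log n/n)}$, and this needs to be $o$ of $\eps p\binom{n}{\lfloor n/2\rfloor}$, roughly $\eps p2^n/\sqrt n$. This is exactly where $pn\to\infty$ is needed: the extender count is linear in $n$.

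The main obstacle is this bookkeeping. Getting the entropy down to $O(2^n/n)$ rather than $O(2^n\log n/n)$ requires care. For example, one can restrict to the middle $n^{2/3}$ layers, and choose extenders in the top layer so that the fingerprint core is determined by fewer bits. With that in hand, Chernoff's inequality on $|\Pp(n,p)\cap\G_{\mathrm{final}}|$ together with the union bound finishes the proof.
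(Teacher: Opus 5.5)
Your lower bound is correct and matches the paper's. The upper bound has a genuine gap: a single-stage container in which each recorded step removes only $\Theta(n)$ sets cannot reach the full range $pn\to\infty$. The container starts as $2^{[n]}$, which has $2^n=\Theta(\sqrt n\binom{n}{\lfloor n/2\rfloor})$ sets. So you may have $k=\Theta(2^n/n)=\Theta(\binom{n}{\lfloor n/2\rfloor}/\sqrt n)$ recorded steps. The weighted number of fingerprints is then $\sum_{a\le k}\binom{2^n}{a}p^a=\exp(\Theta(\frac{2^n}{n}\log(pn)))$.

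Beating this with the Chernoff gain $\exp(-\Omega(\eps^2p\binom{n}{\lfloor n/2\rfloor}))$ requires $\log(pn)=o(p\sqrt n)$, roughly $p\gg\log n/\sqrt n$. Even the entropy $O(2^n/n)$ you hope for would need $p\sqrt n\to\infty$. So the claim that ``this is exactly where $pn\to\infty$ is needed'' is off by a factor $\sqrt n$. For $p=n^{-3/4}$, say, your fingerprint bound already exceeds $p\binom{n}{\lfloor n/2\rfloor}$, so the fingerprint could be all of $\mathcal I$ and the container carries no information. Your proposed repairs do not help. The middle $n^{2/3}$ layers contain $(1-o(1))2^n$ sets. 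The cost comes from the number of steps (initial container size divided by sets removed per step), not from how cheaply each core is encoded.

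The missing idea is the paper's two-stage algorithm. While the container is still of size $\gg\binom{n}{\lfloor n/2\rfloor}$, each step must remove quadratically many sets. Lemma~\ref{lem:quadratic} supplies this: apply Theorem~\ref{thm:main} to the once-subdivided tree $T^+$ of height $2h-1$ and use $T(x,d^2)\subseteq T^+(x,d)$. This gives a copy of $T(x,cn^2)$ in every family of size $(2h-2+\eps)\binom{n}{\lfloor n/2\rfloor}$.

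\begin{itemize}
\item The first stage runs with these quadratic blow-ups. It shrinks the container to $(2h-2+\eps/4)\binom{n}{\lfloor n/2\rfloor}$ sets at a fingerprint cost of $O(2^n/n^2)$ sets. The entropy of this, $O(2^n\log n/n^2)$, is $o(p\binom{n}{\lfloor n/2\rfloor})$ as soon as $pn^{3/2}/\log n\to\infty$.
\item Only then is the linear blow-up from Theorem~\ref{thm:main} used. It produces $O(\binom{n}{\lfloor n/2\rfloor}/n)$ further fingerprint sets, chosen from a pool of size $O(\binom{n}{\lfloor n/2\rfloor})$ that is determined by the first-stage fingerprint. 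Their weighted count is $\exp(O(\binom{n}{\lfloor n/2\rfloor}\log(pn)/n))$, and this is precisely where $pn\to\infty$ suffices.
\end{itemize}

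A secondary point: you propose not recording the steps where some set of $C$ is not in $\mathcal I$. The container is then determined by the fingerprint only if the removed set can be identified from the fingerprint. For example, under the rule ``remove the first set of $C$ not in $\mathcal I$'', the preceding sets of $C$ are in $\mathcal I$ but unrecorded. The paper avoids this by always testing the root image first. It then builds the copy greedily and records every set of $\F$ found before getting stuck.
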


The proof uses the two-stage graph-container algorithm of~\cite{PT} (which in turn originates in the proof for the case $T$ being a chain in \cite{BMT}).  The only part of the argument that was
specific to trees of radius at most two was the first-stage supersaturation
lemma.  The following observation supplies its replacement.

\begin{lemma}\label{lem:quadratic}
For every fixed tree poset $T$ of height $h$, every $x\in T$, and every
$\varepsilon>0$, there is a constant $c=c(T,x,\varepsilon)>0$ such that every
$\A\subseteq 2^{[n]}$ with
\[
    |\A|\geq (2h-2+\varepsilon)\binom{n}{\lfloor \frac{n}{2}\rfloor}
\]
contains a weak copy of $T(x,cn^2)$.
\end{lemma}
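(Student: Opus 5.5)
The plan is to derive Lemma~\ref{lem:quadratic} directly from Theorem~\ref{thm:main}, applied to an auxiliary tree poset whose height is chosen so that $h(T')-1=2h-2$. I would take $T'$ to be the \emph{subdivision} of $T$. For every arc $\overrightarrow{uv}$ of $\overrightarrow{H}(T)$, insert a new element $w_{uv}$ and replace the arc by the two arcs $\overrightarrow{u w_{uv}}$ and $\overrightarrow{w_{uv} v}$. Then $H(T')$ is again a tree, so $T'$ is a tree poset, and $T$ is a (strong) subposet of $T'$ on the original elements.

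The first step is to check that $h(T')=2h-1$. In a tree poset every chain is contained in a directed path of the Hasse diagram, because comparability in a tree poset means there is a directed path in $\overrightarrow{H}$ and such a path is unique. A directed path in $\overrightarrow{H}(T')$ with the most vertices runs between original elements at both ends: a path ending at a subdivision vertex can be extended by its original endpoint. Such a path is the subdivision of a directed path in $T$ with $k\le h$ vertices, and therefore has $2k-1\le 2h-1$ vertices. Equality is attained by subdividing a longest chain of $T$. Hence $h(T')-1=2h-2$, and Theorem~\ref{thm:main} applied to $T'$, the root $x$ and $\eps$ gives $\delta>0$ and $n_0$ such that every $\A$ with $|\A|\ge(2h-2+\eps)\binom{n}{\lfloor n/2\rfloor}$ contains a weak copy of $T'(x,b)$ with $b=\lfloor\delta n\rfloor$.

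The second step is to show that $T(x,b^2)$ is a subposet of $T'(x,b)$ in the weak sense: there is an injection of its elements that preserves the order. An original $u\in T$ at distance $d$ from $x$ in $H(T)$ is at distance $2d$ in $H(T')$. Consequently $T'(x,b)$ has $b^{2d}=(b^2)^d$ copies of $u$, which matches the number of copies of $u$ in $T(x,b^2)$. I would build the injection inductively along the distance from $x$, in the same way as in the sketch of Lemma~\ref{satu}(2). Let $v$ be a child of $u$ in $T$, with $w=w_{uv}$ the subdivision vertex between them. In $T'(x,b)$, a fixed copy of $u$ has $b$ children that are copies of $w$, and each of these has $b$ children that are copies of $v$. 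These $b^2$ copies of $v$ are pairwise distinct, and the sets arising from distinct copies of $u$ are disjoint. I would map the $b^2$ children of the given copy of $u$ in $T(x,b^2)$ bijectively onto these grandchildren. Every Hasse relation of $T(x,b^2)$ is then realized through an intermediate copy of $w$, since $u^i\preceq w^k\preceq v^l$ or the reverse; by transitivity this gives the required comparability. Composing with the weak embedding of $T'(x,b)$ into $\A$ and forgetting the images of subdivision vertices, we obtain a weak copy of $T(x,b^2)$ in $\A$.

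Finally, I would choose the constant. For large $n$ we have $b^2=\lfloor\delta n\rfloor^2\ge \delta^2n^2/2$, so $c=\delta^2/2$ works. This is understood with the blow-up parameter $\lfloor cn^2\rfloor$, noting that $T(x,b')$ is contained in $T(x,b)$ whenever $b'\le b$. Small $n$ are absorbed by shrinking $c$, or by the convention that the statement is asymptotic. The only point requiring care, and the main (mild) obstacle, is the height computation for $T'$, in particular the claim that no chain of $T'$ can exceed $2h-1$ elements. The tree structure settles this, because every chain lies on a single directed Hasse path.
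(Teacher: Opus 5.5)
Your proposal is correct and follows the paper's proof: subdivide every Hasse edge of $T$ once to get a tree poset of height $2h-1$, apply Theorem~\ref{thm:main} to get a weak copy of the blow-up with parameter $\lfloor\delta n\rfloor$, observe that $T(x,b^2)$ sits inside the blow-up of the subdivided poset on the original vertices (two successive $b$-fold branchings combine into one $b^2$-fold branching), and take $c=\delta^2/2$. You also give more detail than the paper on the height computation and on why order relations are preserved through the intermediate subdivision copies.
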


\begin{proof}
Let $T^+$ be obtained from $T$ by subdividing every edge of $H(T)$ once,
orienting both new edges consistently with the original edge.  The Hasse
diagram of $T^+$ is again a tree.  Every directed path with $h-1$ edges in
$T$ becomes a directed path with $2h-2$ edges in $T^+$, and contracting the
subdivision vertices maps every directed path of $T^+$ to one in $T$.
Consequently, $h(T^+)=2h-1$.
Regard $x$ as a vertex of $T^+$.  By
Theorem~\ref{thm:main}, $\A$ contains
$T^+(x,\delta n)$ for some $\delta>0$.

If $u\in T$ is at distance $r$ from $x$ in $H(T)$, then it is at distance
$2r$ from $x$ in $H(T^+)$.  Hence $T^+(x,d)$ contains $d^{2r}$ copies of
$u$.  Moreover, along every original edge, the two successive $d$-fold
branchings combine into one $d^2$-fold branching.  Keeping only the
vertices corresponding to the original vertices of $T$ therefore gives
the containment $T(x,d^2)\subseteq T^+(x,d)$.
Taking $d=\delta n$ proves the lemma with, for example,
$c=\delta^2/2$ for all sufficiently large $n$.
\end{proof}

\begin{proof}[Proof of Theorem~\ref{thm:random}]
Fix $x\in T$ and $\varepsilon>0$. For $\varepsilon/4$, Theorem~\ref{thm:main} and Lemma \ref{lem:quadratic} yield $\delta$ and $c$, respectively. Let $\eta=\min\{\delta,c\}$.  We describe the two-stage container lemma
obtained by running the algorithm from~\cite{PT}. We fix an order $\mathbb O$ of sets in $2^{[n]}$, an order $\mathbb T=t_1,t_2,\dots,t_{|T|}$ of elements of $T$ that starts with $x=t_1$ and for every $i$ $T[t_1,\dots,t_i]$ is a tree poset, and for every $d\ge 1$ we fix an order $\mathbb T^d$ of weak copies of $T(x,d)$ in $2^{[n]}$. The input of the algorithm is a $T$-free family $\F\subseteq 2^{[n]}$. We start by setting $\G^0=2^{[n]},\Hh^0=\emptyset$.

Then for every $i$ if $\G^{i-1},\Hh^{i-1}$ are defined, then in Step $i$, we consider the largest $d$ such that $\G^{i-1}$ contains a copy of $T(x,d)$ and fix the copy $\mathcal T$ that comes first in $\mathbb T^d$. If the set $G_x$ corresponding to $x$ in $\mathcal T$ is not in $\F$, then we let $\G^i=\G^{i-1}\setminus \{G_x\},\Hh^i=\Hh^{i-1}$ and proceed to Step $i+1$.

If $G_x\in \F$, then we start building a copy of $T$ in $\mathcal T\cap \F$ in the fixed order $\mathbb T$. Suppose we have found $G_x=G_{t_1},\dots,G_{t_{j-1}}$ in $\mathcal T\cap \F$ such that these form a copy of $T[t_1,\dots,t_{j-1}]$ and let $t_\ell$ be the parent of $t_j$, i.e. the neighbor of $t_j$ on the $t_jx$-path in $H(T)$. Consider the $d$ neighbors of $G_{t_\ell}$ in $\mathcal T$ corresponding to $t_j$ in the order $\mathbb O$. If any of them belongs to $\F$, then let $G_{t_j}$ be the first one in $\mathbb O$, remove all neighbors preceding $G_{t_j}$ from $\G^{i-1}$ and proceed to find $G_{t_{j+1}}$. If no neighbor belongs to $\F$, then we remove all neighbors and $G_{t_1},\dots,G_{t_{j-1}}$ from $\G^{i-1}$ and whatever remains is $\G^i$ and set $\Hh^i=\Hh^{i-1}\cup \{G_{t_1},G_{t_2},\dots,G_{t_{j-1}}\}$. As $\F$ is $T$-free, there will be a $j$ for which $G_{t_j}$ is not defined. So either we just remove $G_x$ from $\G^{i-1}$ or we move at most $|T|-1$ sets from $\G^{i-1}\cap \F$ to $\Hh^i$ and remove further at least $d$ sets from $\G^{i-1}$. This implies that $\Hh^{i}\subseteq \F$ for all $i$. We terminate the algorithm once $d$ drops below $\eta n$. Let this happen at step $i_2$.

We analyze the algorithm in two stages. Let $i_1$ be the first index when $d$ drops below $\eta n^2$ and let $\Hh_1=\Hh^{i_1}$. By the above, we included new sets in $\Hh_1$ in at most $2^n/(\eta n^2)$ steps, so $|\Hh_1|=O_{T,\eta}(2^n/n^2)$. By Lemma 
~\ref{lem:quadratic}, $|\G^{i_1}|\le (2h-2+\varepsilon/4)\binom{n}{\lfloor \frac{n}{2}\rfloor}$. Similarly, in the next and final stage, there are at most $|\G^{i_1}|/(\eta n)$ steps when new sets are included into $\Hh^i$. Therefore letting $\Hh_2=\Hh^{i_2}\setminus \Hh_1$ we have $|\Hh_2|=O_{T,\eta}(\binom{n}{\lfloor \frac{n}{2}\rfloor}/n)$  and $\Hh_1,\Hh_2\subseteq \F$. Also, by Theorem~\ref{thm:main}, for $g(\Hh_1\cup \Hh_2):=\G^{i_2}$ we have $|g(\Hh_1\cup \Hh_2)|\le (h-1+\varepsilon/4)\binom{n}{\lfloor \frac{n}{2}\rfloor}$ and as we never removed sets of $\F$ without moving them to $\Hh^i$, we have $\F\subseteq \Hh_1\cup \Hh_2\cup g(\Hh_1\cup \Hh_2)$.
Since the algorithm worked with fixed orders $\mathbb O$, $\mathbb T$, $\mathbb T^d$, it follows as in~\cite{PT} (and in \cite{BMT}) that although $\Hh_1$ and $\Hh_2$ may be the same for different $T$-free families $\F$, $g(\Hh_1\cup \Hh_2)$ depends only on $\Hh_1$ and $\Hh_2$ and not on $\F$. Also, $\G^{i_1}$ depends only on $\Hh^{i_1}$, so $\Hh_2$ is a subset of a set depending on $\Hh_1$ of size at most $C_2\binom{n}{\lfloor \frac{n}{2}\rfloor}/n$.

To bound $\mathbb P(\exists \F\subseteq \Pp(n,p) ~\text{$T$-free with}\ |\F|\ge p(h-1+\varepsilon)\binom{n}{\lfloor \frac{n}{2}\rfloor})$, we bound the probability of the event that for some $\Hh_1$ and $\Hh_2$, we have $\Hh_1\subseteq \Pp(n,p),\Hh_2\subseteq \Pp(n,p)$ and $|\Pp(n,p)\cap g(\Hh_1\cup\Hh_2)|\ge p(h-1+\varepsilon/2)\binom{n}{\lfloor \frac{n}{2}\rfloor}$. Note that $pn\rightarrow \infty$ implies $|\Hh_1|,|\Hh_2|=o(p\binom{n}{\lfloor \frac{n}{2}\rfloor})$ and so a $T$-free family of size $p(h-1+\varepsilon)\binom{n}{\lfloor \frac{n}{2}\rfloor}$ should contain at least $p(h-1+\varepsilon/2)\binom{n}{\lfloor \frac{n}{2}\rfloor}$ sets from $g(\Hh_1\cup \Hh_2)$. As $\Hh_1,\Hh_2,g(\Hh_1\cup \Hh_2)$ are pairwise disjoint, for fixed $\Hh_1$ and $\Hh_2$, these are independent events, and by the Chernoff bound the probability of the last one is at most $e^{-\varepsilon^2p\binom{n}{\lfloor \frac{n}{2}\rfloor}/(100h^2)}$ as $\mathbb E|\Pp(n,p)\cap g(\Hh_1\cup\Hh_2)|\le p(h-1+\varepsilon/4)\binom{n}{\lfloor \frac{n}{2}\rfloor}$. Therefore, taking the union bound for each possible $\Hh_1,\Hh_2$, we obtain the upper bound
\[
\sum_{a\leq C_12^n/n^2}\binom{2^n}{a} p^a\sum_{b\leq C_2\binom{n}{\lfloor \frac{n}{2}\rfloor}/n}
     \binom{(2h-2+\varepsilon/4)\binom{n}{\lfloor \frac{n}{2}\rfloor}}{b}p^be^{-\varepsilon^2p\binom{n}{\lfloor \frac{n}{2}\rfloor}/(100h^2)}
\]
For the first sum, we have
\[
 \sum_{a\leq C_12^n/n^2}\binom{2^n}{a} p^a
 \leq
 \exp\!\left(O\!\left(\frac{2^n}{n^2}\log n\right)\right)
 =\exp(o(p\binom{n}{\lfloor \frac{n}{2}\rfloor})),                            
\]
because $\binom{n}{\lfloor \frac{n}{2}\rfloor}=\Theta(2^n/\sqrt n)$ and
 $\frac{2^nn^{-2}\log n}{p\binom{n}{\lfloor \frac{n}{2}\rfloor}}
 =O\!\left(\frac{\log n}{pn\sqrt n}\right)=o(1)$. For the second sum, we have
\[
 \sum_{b\leq C_2\binom{n}{\lfloor \frac{n}{2}\rfloor}/n}
     \binom{(2h-2+\varepsilon/4)\binom{n}{\lfloor \frac{n}{2}\rfloor}}{b}p^b
 \leq
 \exp\!\left(O\!\left(\frac{\binom{n}{\lfloor \frac{n}{2}\rfloor}}{n}\log(2+pn)\right)\right)
 =\exp(o(p\binom{n}{\lfloor \frac{n}{2}\rfloor})), 
\]
as $\log(2+pn)/(pn)=o(1)$.

As the negative exponent in the probability that $|g(\Hh_1\cup \Hh_2)\cap \Pp(n,p)|\ge p(h-1+\varepsilon/2)\binom{n}{\lfloor \frac{n}{2}\rfloor}$ is $\Omega(p\binom{n}{\lfloor \frac{n}{2}\rfloor})$, we get, with high probability, every
$T$-free subfamily of $\Pp(n,p)$ has size at most
$(h-1+\varepsilon)p\binom{n}{\lfloor \frac{n}{2}\rfloor}$.

For the lower bound, the union of the middle $h-1$ levels is $T$-free and
has size $(h-1+o(1))\binom{n}{\lfloor \frac{n}{2}\rfloor}$.  Its intersection with $\Pp(n,p)$ has size
$(h-1-o(1))p\binom{n}{\lfloor \frac{n}{2}\rfloor}$ with high probability by Chernoff's inequality.  Since
$\varepsilon>0$ was arbitrary, the result follows.
\end{proof}

\medskip

The final consequence of Theorem~\ref{thm:main} we consider is about maximal anti-Ramsey numbers of tree posets. Following the graph-theoretic problem of Burr, Erd\H os, Graham, and S\'os \cite{BEGS}, in \cite{LPW} for non-negative integers $n,m$ and a poset $P$ with $\La(n,P)<m\le 2^n$, the maximal anti-Ramsey number $\ar(n,m,P)$ is the smallest integer $k$ such that for any $\F\subseteq 2^{[n]}$ of size $m$ there exists a coloring using $k$ colors with all copies of $P$ in $\F$ being rainbow (all elements receiving distinct colors). This is easily seen to be $\min\{\chi(G_P(\F)):\F\subseteq 2^{[n]},|\F|=m\}$, where $G_P(\F)$ is the graph with vertex set $\F$ and $F,F'\in \F$ being joined by an edge if and only if $\F$ contains a copy $\G$ of $P$ with $F,F'\in \G$. For a tree poset $T$ one can consider the family $\mathcal M_h$ of $h=h(T)$ middle layers, i.e. $\mathcal M_{n,h}=\cup_{i=1}^h\binom{[n]}{\lfloor \frac{n-h}{2}\rfloor +i}$. By symmetry, for two sets $F,F'\in \mathcal M_{n,h}$ whether $\{F,F'\}$ is an edge in $G_T(\mathcal M_{n,h})$ depends only on $|F\cap F'|$, so $G_T(\mathcal M_{n,h})$ is a generalized Johnson graph. As shown in \cite{LPW}, by known results on Johnson graphs, for every tree poset there exists an integer $m_T$ such that $\chi(G_T(\mathcal M_{n,h}))=\Theta(n^{m_T})$. It is conjectured in \cite{LPW} that for any $\varepsilon>0$ there exists $\delta>0$ such that $\ar(n,(h-1+\varepsilon)\binom{n}{\lfloor \frac{n}{2}\rfloor},T)\ge \delta n^{m_T}$. The authors prove their conjecture for tree posets $T$ that contain an element comparable to all other elements of $T$ and that have a so-called branching property. We claim that Theorem~\ref{thm:main} implies the conjecture for further tree posets. Instead of defining a not very natural property that would suffice to prove the statement, we just show an example.

\begin{figure}[ht]
\centering
\begin{tikzpicture}[scale=1.05,
  every node/.style={circle,draw,inner sep=1.8pt},
  every edge/.style={draw,thick}]
  \node (c) at (0,0) {$x$};
  \node (d) at (5,0) {$a$};
  \node (a) at (-1,1.4) {$b$};
  \node (b) at (1.5,1.4) {$c$};
  \node (e) at (4,1.4) {$d$};
  \node (x1) at (-1.8,2.8) {$e$};
  \node (x2) at (-0.2,2.8) {$f$};
  \node (y) at (2.75,2.8) {$z$};
  \path (c) edge (a) edge (b)
        (a) edge (x1) edge (x2)
        (b) edge (y)
        (d) edge (e)
        (e) edge (y);
\end{tikzpicture}
\caption{a tree poset with no element being comparable to all other elements}
\label{fig:T2}
\end{figure}
For the tree poset $T$ in Figure 1, we have $m_T=2$. Two sets $F,F'\in \binom{[n]}{\lfloor \frac{n-3}{2}\rfloor+i}$ are joined in $G_T(\mathcal M_{n,h})$ if and only if $|F\triangle F'|$ equals the graph distance of two elements with rank $i$ (ranks are from 1 to 3). Equivalently, two sets $F,F'$ of rank $i$ are joined if $|F\cap F'|=|F|-2$ or for $i=2,3$ if $|F\cap F'|=|F|-1$. So all layers have chromatic number $\Theta(n^2)$.

Now consider a family $\F\subseteq 2^{[n]}$ of size $(2+\varepsilon)\binom{n}{\lfloor \frac{n}{2}\rfloor}$. By Theorem~\ref{thm:main} it contains a copy $\mathcal T$ of the blow-up $T(x,\delta n)$ for some $\delta>0$. There are $(\delta n)^2$ sets $\mathcal Z$ in $\mathcal T$ corresponding to $z$. It is easy to see that for any two elements $z_1,z_2\in T(x,\delta n)$ corresponding to $z$, there exists a copy of $T$ in $T(x,\delta n)$ that contains $z_1,z_2$. Indeed, if $z_1,z_2$ have a common neighbor $c_1$, then with $c_3$, $z_3$, $d_3$, $a_3$ forming a path in $T(x,\delta n)$ and $c_1\neq c_3$, we have a bijection $\iota:T\rightarrow T(x,\delta n)$ with $\iota(e)=z_1,\iota(f)=z_2,\iota(b)=c_1,\iota(x)=x,\iota(c)=c_3,\iota(z)=z_3,\iota(d)=d_3,\iota(a)=a_3$. Finally, if the shortest path between $z_1,z_2$ in $T(x,\delta n)$ passes through $x$ and $c_1,c_2$ are the other two elements of the path with $c_i$ being a neighbor of $z_i$, then the mapping $\iota(x)=x,\iota(b)=c_1,\iota(e)=z_1,\iota(c)=c_2,\iota(z)=z_2$ can be extended to a bijection. This shows that $\mathcal Z$ forms a clique in $G_T(\mathcal T)$ and thus $\chi(G_T(\mathcal T))\ge (\delta n)^2$.

\bigskip

\noindent \textbf{AI declaration}: The author had a proof of Proposition \ref{prop:extraction} in the special case $h=3$ (as sketched in Remark \ref{rem}), the general proof was found by ChatGPT 5.6. All other mathematical ideas and proofs are due to the author. The first draft of the paper was written by ChatGPT which was then checked and re-written by the author who claims full responsibility of the content.

\end{document}